\documentclass[reqno,11pt]{amsart}

\usepackage[a4paper,margin=25mm]{geometry}
\usepackage{mathtools}
\usepackage{dsfont}
\usepackage{xcolor}
\usepackage[foot]{amsaddr}
\usepackage[shortlabels]{enumitem}
\usepackage{graphicx}
    \graphicspath{{fig/}}
\usepackage{subfigure}
\usepackage[noadjust]{cite}
\usepackage{placeins}

\newtheorem{theorem}{Theorem}
\newtheorem{lemma}{Lemma}
\newtheorem{proposition}{Proposition}
\theoremstyle{definition}
\newtheorem{definition}{Definition}
\newtheorem{example}{Example}
\theoremstyle{remark}
\newtheorem{remark}{Remark}

\newcommand{\entropy}{\mathcal H}
\newcommand{\abs}[1]{\left\lvert#1\right\rvert}
\newcommand{\set}[1]{\left\{#1\right\}}
\newcommand{\real}{\mathbb R}
\newcommand{\ind}{\mathds 1}
\DeclareMathOperator{\supp}{supp}

\allowdisplaybreaks

\begin{document}
\title{Differential Shannon and R\'enyi entropies revisited}
\author[Yuliya Mishura and Kostiantyn Ralchenko]{Yuliya Mishura$^{1}$ and Kostiantyn Ralchenko$^{1,2}$}
\address{$^1$Taras Shevchenko National University of Kyiv}
 
\address{$^2$University of Vaasa}
\email{yuliyamishura@knu.ua, kostiantynralchenko@knu.ua}

\thanks{Y.M. is supported by The Swedish Foundation for Strategic Research, grant UKR24-0004, and by the Japan
Science and Technology Agency CREST, project reference number JPMJCR2115.
K.R. is supported by the Research Council of Finland, decision number 367468.}

\begin{abstract}
Shannon entropy for discrete distributions is a fundamental and widely used concept,  but its continuous analogue, known as differential entropy, lacks essential properties such as positivity and compatibility with the discrete case. In this paper, we analyze this incompatibility in detail and illustrate it through examples. To overcome these limitations, we propose modified versions of Shannon and R\'enyi entropy that retain key properties, including positivity, while remaining close to the classical forms. We also define compatible discrete functionals and study the behavior of the proposed entropies for the normal and exponential distributions.
\end{abstract}

\keywords{Shannon entropy, differential entropy, R\'enyi entropy, alternative entropy measures}

\subjclass{94A17, 60E05}

\maketitle
\section{Introduction}\label{sec-intr}
Since C. Shannon's seminal work \cite{Shannon48}, the definition of the entropy of a discrete distribution has been used in a wide variety of applications, including information technology, physics, engineering, communications, biology, medicine, economics, finance, cryptography, machine learning and many other fields. Among the examples, we mention just \cite{ali2023shannon,carcassi2021variability,dwivedi2022application,ellerman2021new,Feutrill21,nascimento2024electron,Rodriguez22,zachary2021urban}, some other examples are contained in \cite{BMR,entropy-gaussian}, but this list by no means can be exhausted here. The success of this concept is eloquently demonstrated by the 111,935 citations to the article \cite{Shannon48} in Google Scholar.  The definition of the entropy of a discrete distribution is perfect in the sense that entropy is strictly positive for any non-degenerate distribution, it corresponds to the notion of  the Gibbs entropy in  thermodynamic theory and  it satisfies a number of  axiomatic properties that uniquely determine it \cite{Aczel74}.

However, it is well known that when moving from a discrete distribution to a continuous one, Shannon entropy loses some necessary properties. This happens for a clear and long-explained reason: as the number of events increases and probabilities ``disperse'',  entropy also increases without any restrictions. In a standard situation, as a rule, a term of the type $\log N$ appears  as $N\to \infty. $ We shall describe both standard and nonstandard rates of divergence of discrete entropy to $\infty$ in Section \ref{section-diff-incomp}.  One of the attempts to adjust the notions of discrete and continuous Shannon entropies was made by E.\,T.~Jaynes \cite{Jaynes62} by introducing limiting density of discrete points, it is also described in Section \ref{section-diff-incomp}. However, the term $\log N$ is also present in his considerations. Another possibility is the following one: having   established that continuous random
objects do not allow  existence of a finite absolute measure of uncertainty (entropy), it is however possible to introduce a relative quantitative measure of uncertainty in the continuous
case as well. As a standard for comparison, it is possible to  take the uncertainty of some simple distribution, for example, uniform in an interval of width that tends to zero, and get the entropy of continuous distribution as some relative value. For more detailed information see, e.g., \cite{strat}. The name ``differential entropy'' comes, as we understand, from the fact that in this case distribution function is, in a certain sense, differentiable. Thus, the concept of entropy of a continuous distribution is to some extent relative, but the entropy itself, if it exists, is considered and used as a fixed number, and its connection with other distributions is ignored. However, this fixed value can be either positive or negative or even zero, and for no apparent reason it can be zero for a non-degenerate distribution whose connection with some other reference distribution cannot be traced, and therefore zero entropy seems completely illogical. Therefore our idea is to propose alternative versions of Shannon  entropy and to study their properties. On the one hand, we decided not to move far from the original Shannon entropy, on the other hand, to ensure the positivity of the obtained alternative entropies. Then we apply the same approach to the  R\'enyi entropy. 

The paper is organized as follows. In Section \ref{Sec2} we give the basic definitions of Shannon entropy for discrete and continuous distributions and consider some ``bad'' example  where the discrete entropy is infinite (with the sign $+$, of course), and two examples of infinite  differential entropy, both with    signs $+$ and $-$. In Section \ref{section-diff-incomp} we prove the incompatibility of differential  Shannon  entropy   with its discrete counterparts. This result is very well known, and therefore Lemma   \ref{lemmatransit1} can be considered as the part of some survey, however, we preferred to give it a rigorous proof, as opposed to numerous physically-rigorous arguments, and supply the result with several examples.  In Section \ref{sect.4} we propose alternative versions of Shannon  entropy and study their properties. Also, we propose discrete functionals compatible with differential Shannon entropy and its alternatives. In principle, again, for the differential entropy the form of a compatible discrete functional is very well-known, and again, we supply this notion with rigorous proof. Rigorous proof needs some additional assumptions that are discussed in detail. Then the form of  compatible discrete functionals for alternatives is obvious.  In Section \ref{sec5.behav} the behavior of alternative versions of Shannon differential  entropy as the functions of parameters of distributions is studied. In Section~\ref{sec6} we go the same steps, but more briefly, for R\'enyi entropy.

\section{Standard discrete and differential Shannon entropies and some examples}\label{Sec2}
Let us recall notions of Shannon entropy for discrete and absolutely continuous distribution.

\begin{definition}
Let $\{p_k, k \ge 1\}$ be a discrete distribution (with finite or countable number of non-zero probabilities).
Then its \emph{Shannon entropy} equals
\[
\entropy_{SH} (\{p_k, k \ge 1\}) = - \sum_{k \ge 1} p_k \log p_k.
\]
\end{definition}

\begin{remark}\label{rem:1} Shannon entropy of the discrete distribution is always positive and strictly positive as far as  the distribution is non-degenerate. 
If the number of $p_k$ is countable, it is assumed that the series
$\sum_{k \ge 1} p_k \abs{\log p_k} < \infty$.
Otherwise, we say that the distribution has infinite entropy.
\end{remark}

\begin{example} As a simple  example of the distribution with the infinite entropy, consider
$L \coloneqq \sum_{k=2}^\infty \frac{1}{k \log^2 k} < \infty$
and define
\[
p_k = \frac{1}{L k \log^2 k}.
\]
Then
\[
-\log p_k = \log L + \log k + 2 \log \log k,
\]
and the series
$\sum_{k=2}^\infty p_k \abs{\log p_k} = \infty$.
\end{example}

\begin{definition}
Let $\{p(x), x \in \real\}$ be a density of a probability distribution.
Then its \emph{Shannon entropy} (sometimes called \emph{differential entropy}) equals
\begin{equation}\label{entrShan0}
\entropy_{SH} (\{p(x), x \in \real\}) = - \int_\real p(x) \log p(x)\,dx=  \int_\real p(x)  \log \frac{1}{p(x)}\,dx,
\end{equation}
if
$\int_\real p(x) \abs{\log p(x)}\,dx < \infty$.
Otherwise, we say that the distribution has infinite entropy.
\end{definition}

\begin{remark} Differential entropy can be of any sign and even zero for the non-degenerate distribution. 
Infinite differential entropy can be both $+\infty$ and $-\infty$.
\end{remark}
\begin{example} Let
\[
p(x) = \frac{\log 2}{x \log^2 x} \ind\{x \ge 2\}.
\]
Then
\[
  \int_2^\infty p(x) \log \frac{1}{p(x)} \,dx
=   \int_2^\infty \frac{\log 2}{x \log^2 x} \left( \log x + 2\log\log x-\log\log 2  \right) \,dx = +\infty.
\]
\end{example}
\begin{example} Let
\[
p(x) = L^{-1} \sum_{k=2}^\infty k \ind\set{x \in \left[k, k+\frac{1}{k^2\log^2 k}\right]},
\]
where $L$ is defined in Example  \ref{rem:1}. Then
\[
- \int_\real p(x) \log p(x) \,dx
= -L^{-1} \sum_{k=2}^\infty \frac{k \log k}{k^2 \log^2 k}
= -\infty. 
\]
\end{example}

 \section{Differential  Shannon  entropy is incompatible with its discrete counterparts}\label{section-diff-incomp}

Throughout the paper, we shall use the following notations and assumptions, in what follows referred as Assumption (A). 
\begin{itemize}
\item[(A)] Denote $p=p(x),\,x \in \real$ the density of probability distribution, $F(x)=\int_{-\infty}^x p(y) dy$ be its cumulative distribution function, $\pi_N=\{x_k^N, k = 0, \dots, k_N\}$ be a sequence of partitions of $\real$ such that
$x_0^N \to -\infty$, $x_{k_N}^N \to \infty$  as $N \to \infty$, $\Delta_k^N=x_k^N - x_{k-1}^N$ and $\Delta F_k^N=F\left(x_k^N\right) - F\left(x_{k-1}^N\right).$ Also, we assume that $|\pi_N|=\max_{1\le k \le k_N}\Delta_k^N\to 0 $ as $N \to \infty$.
\end{itemize}

As it was already mentioned, it is very well known that differential entropy is not a continuous analog of discrete Shannon entropy. In order to clarify the situation,  consider  a sequence of quite natural discretizations of a continuous distribution  and  obtain an infinite limit for the corresponding entropies. It is performed in  the following lemma. We formulate it for the distribution  with continuous density, for technical simplicity, however, it admits the generalization to arbitrary density. At the physical level of rigor, this   fact has been discussed for a very long time, but we provide here a strictly mathematical proof, which is  very simple.
\begin{lemma}\label{lemmatransit1}
Let $p(x),\,x \in \real$, be a  density of probability distribution, $p\in C(\real).$ 
Then, in terms of Assumption (A), 
\begin{align*}
\entropy_{SH}^N &\coloneqq -\sum_{k=1}^{k_N}  \Delta F_k^N \log\Delta F_k^N
- F\left(x_0^N\right) \log F\left(x_0^N\right) \\*
&\quad - \Bigl(1 - F\left(x_{k_N}^N\right)\Bigr) \log\Bigl(1 - F\left(x_{k_N}^N\right)\Bigr)
\to +\infty \quad \text{as } N \to \infty,
\end{align*}
where we put $\Delta F_k^N \log\Delta F_k^N=0$ if $\Delta F_k^N=0$, and similar assumption is made for the first and last terms. 
\end{lemma}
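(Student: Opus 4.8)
The plan is to use two simple observations. First, since $-t\log t \ge 0$ for every $t \in [0,1]$ and each of the quantities $\Delta F_k^N$, $F(x_0^N)$, $1 - F(x_{k_N}^N)$ lies in $[0,1]$, \emph{every} term in the expression defining $\entropy_{SH}^N$ is nonnegative; hence it suffices to bound from below the contribution of a conveniently chosen subfamily of indices $k$. Second, on any bounded interval the continuous density $p$ is bounded, so the increments $\Delta F_k^N$ over the intervals of a sufficiently fine partition are uniformly small, which forces $-\log \Delta F_k^N$ to be large; this is where the divergence comes from.

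Concretely, I would first fix $a < b$ with $F(b) - F(a) > 0$ (possible because $\int_\real p(x)\,dx = 1$) and set $M \coloneqq \max_{x \in [a,b]} p(x) < \infty$, which is finite by continuity of $p$ on the compact set $[a,b]$. By Assumption~(A), for all large $N$ we have $x_0^N < a$, $x_{k_N}^N > b$ and $M\abs{\pi_N} < 1$. For every index $k$ with $I_k^N \coloneqq [x_{k-1}^N, x_k^N] \subseteq [a,b]$ one has $\Delta F_k^N = \int_{I_k^N} p(x)\,dx \le M\Delta_k^N \le M\abs{\pi_N} < 1$, so, since $t \mapsto -\log t$ is decreasing on $(0,1]$,
\[
-\Delta F_k^N \log \Delta F_k^N \;\ge\; \bigl(-\log(M\abs{\pi_N})\bigr)\,\Delta F_k^N \;\ge\; 0 .
\]
Summing over these $k$ and discarding the remaining (nonnegative) terms of $\entropy_{SH}^N$ gives
\[
\entropy_{SH}^N \;\ge\; \bigl(-\log(M\abs{\pi_N})\bigr) \sum_{k \,:\, I_k^N \subseteq [a,b]} \Delta F_k^N .
\]

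It then remains to check that the last sum stays bounded away from $0$. Among the intervals of $\pi_N$ that meet $[a,b]$, the only ones not contained in $[a,b]$ are the (at most two) intervals straddling the endpoints $a$ and $b$, each of length at most $\abs{\pi_N}$; hence the union of those $I_k^N$ that \emph{are} contained in $[a,b]$ covers $[a+\abs{\pi_N},\, b-\abs{\pi_N}]$, and therefore $\sum_{k \,:\, I_k^N \subseteq [a,b]} \Delta F_k^N \ge F(b-\abs{\pi_N}) - F(a+\abs{\pi_N}) \to F(b) - F(a) > 0$ as $N \to \infty$, by continuity of $F$, while $-\log(M\abs{\pi_N}) \to +\infty$ since $\abs{\pi_N} \to 0$. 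Combining this with the previous display yields $\entropy_{SH}^N \to +\infty$. The only point that needs a little care is this final bookkeeping — verifying that the intervals lying entirely inside $[a,b]$ asymptotically carry the full mass $F(b)-F(a)$ — but it is elementary given $\abs{\pi_N} \to 0$ and the continuity of $F$; everything else reduces to the inequality $-t\log t \ge 0$ on $[0,1]$ and the local boundedness of $p$.
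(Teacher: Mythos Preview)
Your proof is correct and follows essentially the same strategy as the paper's: restrict to a finite interval $[a,b]$, use nonnegativity of all terms to discard the rest, and exploit the upper bound on $p$ over $[a,b]$ to force $-\log\Delta F_k^N \ge -\log(M\abs{\pi_N}) \to +\infty$. The only (minor) difference is that the paper also fixes a positive lower bound $m_1$ for $p$ on $[a,b]$ and uses it to get $\sum_k \Delta F_k^N \ge m_1\cdot\frac{b-a}{2}$, whereas you bound this sum directly by $F(b-\abs{\pi_N})-F(a+\abs{\pi_N})\to F(b)-F(a)>0$, which is slightly cleaner since it avoids having to argue that such an interval with $p$ bounded away from zero exists.
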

\begin{remark} Of course, we will obtain the same result for the simplified sum 
   $$\widetilde\entropy_{SH}^N=-\sum_{k=1}^{k_N}  \Delta F_k^N \log\Delta F_k^N, $$
   because $F\left(x_0^N\right) \log F\left(x_0^N\right)\to 0$ and $\Bigl(1 - F\left(x_{k_N}^N\right)\Bigr) \log\Bigl(1 - F\left(x_{k_N}^N\right)\Bigr)\to 0$ as $N\to \infty$. Here and in what follows we use that $x\log x\to 0$ as $x\to 0$ without mentioning it again.
\end{remark}

\begin{proof}
Note that all terms in $\entropy_{SH}^N$ are strictly positive or equal to zero (taken with their minuses, of course).  Denote $s(p)=\supp \{p(x), x\in \real\}$. Since
$\int_\real p(x) dx = 1$, it follows that 
$\lambda\{x : p(x) \ge M\} \to 0$ as $M \to \infty$, 
where $\lambda$ is the Lebesgue measure on $\real$.
Then it follows from the continuity of $p$ that we can find   some $0<m_1<M_1$ and   the interval $[a,b] \subset s(p)$ such that
  $0 < m_1 < p(x) \le M_1$ on $[a,b]$, and $a<b$. Consider  those points $x_k^N$ of partition which are inside $[a,b]$, and let $x_{k^N_1}^N$ and $x_{k^N_2}^N$ be the   left endpoint and  right endpoint  of such $x_k^N$.  Since $x_{k^N_1}^N\downarrow a$ and   $x_{k^N_2}^N \uparrow b$  as $N\to \infty$, there exists $N_0$ such that for $N>N_0$ it holds that $x_{k^N_2}^N-x_{k^N_1}^N>\frac{b-a}{2}$.
Note that for $k^N_1<k\le k^N_2$ we have the inequalities 
$0<m_1\Delta_{k}^N \le \Delta F_{k}^N \le M_1 \Delta_{k}^N\le M_1|\pi_N|\to 0$, therefore there exists $N_1$ such that for $N>N_1$
   logarithms of the increments are nonzero. Consequently, 
\[
  \log\Delta F_{k}^N 
<- \log \frac{1}{M_1 |\pi_N|}.
\] 
Then for $N>N_0\vee N_1$
\[
\entropy_{SH}^N 
\ge - \sum_{k=k^N_1+1}^{k^N_2} \Delta F_{k}^N \log\Delta F_{k}^N
> \frac{b-a}{2} m_1 \log\frac{1}{M_1 |\pi_N|},
\]
where the latter value tends to $+\infty$ as $N\to\infty$.
Lemma is proved.
\end{proof} 
\begin{remark} With the same success, in the course of the proof we could consider not the intervals that lie strictly inside $[a,b]$, but those that intersect with $[a,b]$, as we will do further in similar cases.
    
\end{remark}
Let us illustrate Lemma \ref{lemmatransit1} with the help of  uniform and Gaussian   distributions. In both cases we consider uniform partitions.
\begin{example}\label{exam4} Let $p(x)=(b-a)^{-1}1_{x\in[a,b]},$ and $t_k^N=a+\frac{(b-a)k}{N}, \,0\le k\le N$. Then $$\entropy_{SH}^N=\sum_{k=1}^N\frac1N\log N=\log N,$$
therefore, entropy increases with a logarithmic rate.
    
\end{example}

\begin{example}\label{exam5}
Let $p(x) = \frac{1}{\sigma\sqrt{2\pi}} \exp\left(- \frac{(x - m)^2}{2\sigma^2}\right)$
denote the density of the Gaussian distribution $\mathcal{N}(m, \sigma^2)$, where $m\in\real, \sigma>0, x\in \real.$  
Consider the partition
\[
\pi_N = \left\{ -N, -N + \frac{1}{N}, -N + \frac{2}{N}, \dots, N - \frac{1}{N}, N \right\}.
\]
Then
\begin{equation}\label{eq:entr-disc-gaus}
\entropy_{SH}^N = - \sum_{k = -N^2}^{N^2 - 1} \Delta F_{k}^N \log \Delta F_{k}^N
- R(N),
\end{equation}
where
\begin{gather*}
\Delta F_{k}^N = \int_{\frac{k}{N}}^{\frac{k+1}{N}} p(x) \, dx, \quad k = -N^2, -N^2 + 1, \dots, N^2 - 1, \end{gather*}
and \begin{equation}\label{residual}R(N):= F(-N) \log F(-N)  
  + \bigl(1 - F(N)\bigr) \log\bigl(1 - F(N)\bigr).
\end{equation}
Obviously,  both $F(-N)$ and $1-F(N)$ tend to zero as $N \to \infty$, and we obtain that
\[
R(N) \to 0, \quad N \to \infty.
\]

By the mean value theorem, for each $k$, there exists $\theta_k^N \in \left(\frac{k}{N}, \frac{k+1}{N}\right)$ such that
\[
\Delta F_{k}^N = p\left(\theta_k^N\right) \cdot \frac{1}{N},
\]
and therefore,
\[
\log \Delta F_{k}^N = \log p\left(\theta_k^N\right) - \log N.
\]
Substituting this identities into \eqref{eq:entr-disc-gaus} yields
\begin{equation}
\entropy_{SH}^N 
= -\sum_{k = -N^2}^{N^2 - 1} \Delta F_{k}^N \log p\left(\theta_k^N\right) 
+ \log N \sum_{k = -N^2}^{N^2 - 1} \Delta F_{k}^N + o(1), \quad N \to \infty.
\label{eq:entr-disc-gaus2}
\end{equation}
Observe that the second sum in \eqref{eq:entr-disc-gaus2} tends to one:
\begin{equation}\label{eq:riemsum1}
\sum_{k = -N^2}^{N^2 - 1} \Delta F_{k}^N 
= \int_{-N}^{N} p(x) \, dx 
\to \int_{-\infty}^{\infty} p(x) \, dx = 1, \quad \text{as } N \to \infty.
\end{equation}
Let us estimate the first sum in \eqref{eq:entr-disc-gaus2}.
For $x \in (\frac{k}{N}, \frac{k+1}{N})$ we have
\begin{align*}
\abs{\log p\left(\theta_k^N\right)}
&= \abs{-\log\left(\sigma\sqrt{2\pi}\right) - \frac{\left(\theta_k^N - m\right)^2}{2\sigma^2}}
\le \abs{\log\left(\sigma\sqrt{2\pi}\right)} + \frac{\left(\theta_k^N - x\right)^2 + \left(x - m\right)^2}{\sigma^2}
\\
&\le \abs{\log\left(\sigma\sqrt{2\pi}\right)} + \frac{1}{\sigma^2 N^2}
+\frac{1}{\sigma^2}\left(x - m\right)^2
\le C +\frac{1}{\sigma^2}\left(x - m\right)^2,
\end{align*}
where $C = \abs{\log\left(\sigma\sqrt{2\pi}\right)} + \sigma^{-2}$.
Using this bound, we estimate the first sum: 
\begin{align}
\MoveEqLeft
\sum_{k = -N^2}^{N^2 - 1} \Delta F_{k}^N \abs{\log p\left(\theta_k^N\right)} 
= \sum_{k = -N^2}^{N^2 - 1} \int_{\frac{k}{N}}^{\frac{k+1}{N}} p(x)  \abs{\log p\left(\theta_k^N\right)} \, dx
\notag\\
&\le C \int_{-N}^{N} p(x)\,dx + \frac{1}{\sigma^2} \int_{-N}^{N} (x-m)^2 p(x)\,dx 
\notag\\
&\le C \int_{-\infty}^{\infty} p(x)\,dx + \frac{1}{\sigma^2} \int_{-\infty}^{\infty} (x-m)^2 p(x)\,dx 
= C + 1.
\label{eq:riemsum2}
\end{align}

Combining \eqref{eq:entr-disc-gaus2}--\eqref{eq:riemsum2}, we conclude that for the Gaussian distribution
\[
\entropy_{SH}^N \sim \log N, \quad N \to \infty,
\]
i.e., the discretized Shannon entropy grows logarithmically with $N$, as in the case of the uniform distribution.
\end{example}

In Examples \ref{exam4} and \ref{exam5} we have chosen a ``moderate'' length of the diameter of partition. Now let us show that, decreasing the interval, we increase the rate of divergence of entropy $\entropy_{SH}^N$ to infinity.
\begin{example}
    
Assume that the density $p(x)$ is bounded and nonzero on the whole $\real$: $p(x)\le C$, $x\in\real$. Since  for $R(N)$ from \eqref{residual} it holds that $R(N)\to 0$   as $N \to \infty,$ we can choose such $N$ that $|R(N)|<1/2$, and additionally $F(N)-F(-N)>1/2.$  Further, consider any positive increasing unbounded   sequence $A_N$ such that $e^{A_N}\in \mathbb{N}$ and $ A_N-\log N\to \infty$ when $N\to\infty$, and choose a partition of the form $x_k^N=-N+\frac{2kN}{e^{A_N}},\,0\le k\le e^{A_N}$. Then $\Delta F_{k}^N\le \frac{2CN}{e^{A_N}},$ whence $$\entropy_{SH}^N\ge \frac12\left(A_N-\log(2C)-\log N\right)\sim \frac12 A_N,$$ so, we indeed can achieve any rate of divergence.

\end{example}

So, we see that the formulas for Shannon entropy for discrete and continuous distributions are, in some sense,  incompatible.   As it was mentioned in Section \ref{sec-intr}, one of the attempts to adjust the notions of discrete and continuous Shannon entropies was made by E.\,T.~Jaynes \cite{Jaynes62} by introducing limiting density of discrete points.
This notion has the following form: let we have a set of $N$ discrete points such that
\[
\lim_{N\to\infty} \frac1N \bigl( \text{number of points in } (a, b) \bigr)
= \int_a^b m(x)\,dx,
\]
where $m$ is some non-negative integrable function. Then the respective entropy is defined as the value having the following  asymptotic behavior:   
\[
\entropy_N \sim \log N - \int_\real p(x) \log \frac{p(x)}{m(x)} \,dx,
\quad N \to \infty.
\]
Having a term $\log N$, $\entropy_N$ in inconvenient to use in rigorous mathematical calculations.
In this connection we propose a bit another approach to the definition of differential Shannon  entropy.

\section{  Alternative versions of Shannon  entropy and their properties. Discrete functionals compatible with differential Shannon entropy and with its alternatives}\label{sect.4}
From now on, we consider the distributions with density satisfying the assumption 
\[
 \int_\real p(x) \abs{\log p(x)}\,dx < \infty.
\]
Having established that even the discretization of a continuous distribution leads to entropies that grow to infinity, we abandon the attempt to relate discrete and continuous entropies but instead we consider three alternatives to continuous Shannon entropy. All of them are strictly positive, do not contain any unbounded terms and, what is even more important, have the behavior with respect to the parameters of distribution that are similar to Shannon entropy.

Consider the following alternative functionals to standard Shannon entropy of absolutely continuous distribution. They are created by analogy  with original formula \eqref{entrShan0}. More precisely, let  
\begin{gather}\label{alterShann}
\entropy_{SH}^{(1)}(\{p(x), x \in \real\}) = \int_\real p(x) \abs{\log p(x)} \, dx;
\\
\label{alterShann2}
\entropy_{SH}^{(2)}(\{p(x), x \in \real\}) = \int_\real p(x)  (-\log p(x))_+  \, dx;
\\
\label{alterShann3}
\entropy_{SH}^{(3)}(\{p(x), x \in \real\}) = \int_\real p(x) \log \bigl((p(x))^{-1} + 1\bigr) \, dx;
\end{gather}
and, if $p(x)$ is bounded,
\[
\entropy_{SH}^{(4)}(\{p(x), x \in \real\}) = \int_\real \frac{p(x)}{M} \log \frac{M}{p(x)} \, dx,
\quad \text{where }M = \sup_{x \in \real} p(x).
\]

Obviously, $\entropy_{SH}^{(i)}$ are strictly positive for $i = 1, 2,3$,
$\entropy_{SH}^{(4)} \ge 0$ and equals zero only if
$p(x) = \frac{1}{b-a}$, $x \in [a, b]$. (In any case we integrate over $\supp p(x)$.)
Now, let us establish how alternative functionals can be obtained as the limits of the discretization procedure. 
\begin{theorem}\label{theor1}
    Let the function $p = p(x)$, $x \in \real$ be a density of probability distribution, satisfying the assumptions
\begin{itemize}
\item[$(B)$ $(i)$] $p\in C(\real)$   and  $\int_\real p(x)|\log p(x)|dx<\infty$;

 \item[$(ii)$] there exist $ x_0>0$ and $K >0$ such   that for any $D>x_0$ and  $|x|,|y|\in( x_0, D) $    it holds that   $ p(x)\le K p(y)$ if $|x-y|\le 1/D$.
 \end{itemize} 
  Then, for the sequence of the uniform partitions  satisfying  Assumption (A), 
\begin{equation}\label{firstlimits}
 \entropy_{SH} = \lim_{N\to\infty} \sum_{k= 1}^{k_N} \Delta F_k^N  {\log \left(\frac{\Delta F_k^N}{\Delta x_k^N}\right)},\,\,\text{and}\,\,\,   
\entropy_{SH}^{(1)} = \lim_{N\to\infty} \sum_{k= 1}^{k_N} \Delta F_k^N \abs{\log \left(\frac{\Delta F_k^N}{\Delta x_k^N}\right)},
\end{equation} 
where we put $\log \left(\frac{\Delta F_k^N}{\Delta x_k^N} \right)=0$ if $\Delta F_k^N=0$.
\end{theorem}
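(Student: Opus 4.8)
The plan is to reduce both limits in \eqref{firstlimits} to the convergence of ordinary Riemann sums of continuous functions; the delicate point will be the exchange of the two limiting operations --- refinement of the mesh and growth of the integration window --- for which hypothesis $(ii)$ is tailor-made. By the mean value theorem applied to $F$ on each subinterval $[x_{k-1}^N,x_k^N]$ there is a point $\theta_k^N$ in it with $\Delta F_k^N=p(\theta_k^N)\,\Delta x_k^N$, hence $\Delta F_k^N/\Delta x_k^N=p(\theta_k^N)$. Writing $h_N=|\pi_N|$ for the common spacing $\Delta x_k^N$ of the uniform partition, the two sums in \eqref{firstlimits} become
\begin{gather*}
\sum_{k=1}^{k_N}\Delta F_k^N\log\frac{\Delta F_k^N}{\Delta x_k^N}=h_N\sum_{k=1}^{k_N}p(\theta_k^N)\log p(\theta_k^N),\\
\sum_{k=1}^{k_N}\Delta F_k^N\left|\log\frac{\Delta F_k^N}{\Delta x_k^N}\right|=h_N\sum_{k=1}^{k_N}p(\theta_k^N)\bigl|\log p(\theta_k^N)\bigr|,
\end{gather*}
that is, tagged Riemann sums over the expanding windows $[x_0^N,x_{k_N}^N]$ of the functions $g_1(x)=p(x)\log p(x)$ and $g_2(x)=p(x)\,|\log p(x)|$, which are continuous on $\real$ because $t\mapsto t\log t$ and $t\mapsto t|\log t|$ extend continuously by $0$ at the origin. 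In view of \eqref{entrShan0}--\eqref{alterShann}, it therefore suffices to prove that $h_N\sum_k p(\theta_k^N)\log p(\theta_k^N)\to\int_\real g_1\,dx$ and $h_N\sum_k p(\theta_k^N)|\log p(\theta_k^N)|\to\int_\real g_2\,dx$, the two integrals being finite by $(B)(i)$.

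Fix $\varepsilon>0$ and, using $(B)(i)$ and $\int_\real p\,dx=1$, choose $T>x_0$ with $\int_{|x|>T}g_2\,dx<\varepsilon$, $\int_{|x|>T}|g_1|\,dx<\varepsilon$ and $\int_{|x|>T}p\,dx<\varepsilon$. In each sum split off the central part --- the terms whose interval $I_k=[x_{k-1}^N,x_k^N]$ meets $[-T,T]$ --- from the tail part (the remaining terms; for those, $I_k\subset\{|x|>T\}$). The union of the central intervals is an interval $[a_N,b_N]$ with $a_N\to-T$, $b_N\to T$, contained in $[-T-1,T+1]$ for $N$ large, on which $g_i$ is bounded and continuous; hence the central tagged Riemann sum converges, as $N\to\infty$, to $\int_{-T}^{T}g_i\,dx$ for $i=1,2$. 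It remains to bound the tail parts uniformly in $N$.

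For the tail we use $(ii)$. Suppose first that the window and the mesh are balanced, in the sense that $|\pi_N|\bigl(|x_0^N|\vee x_{k_N}^N\bigr)\le1$ for all large $N$, and set $D_N:=|x_0^N|\vee x_{k_N}^N$. Then $D_N>x_0$ for $N$ large, $h_N=|\pi_N|\le1/D_N$, and the whole window lies in $[-D_N,D_N]$; hence every tail interval $I_k$ lies in $\{x_0<|x|<D_N\}$, and $(ii)$ applied with $D=D_N$ gives $|\log p(\theta_k^N)|\le|\log p(x)|+\log K$ for all $x\in I_k$ (the endpoints of the extreme intervals being covered by continuity of $p$). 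Multiplying by $p(x)$, integrating over $I_k$, and using $\Delta F_k^N=\int_{I_k}p\,dx=h_N\,p(\theta_k^N)$, we obtain
\[
h_N\,p(\theta_k^N)\,\bigl|\log p(\theta_k^N)\bigr|=\int_{I_k}\bigl|\log p(\theta_k^N)\bigr|\,p(x)\,dx\le\int_{I_k}p(x)\,|\log p(x)|\,dx+\log K\cdot\Delta F_k^N .
\]
Summing over the tail intervals, which are pairwise disjoint and contained in $\{|x|>T\}$, yields $h_N\sum_{\mathrm{tail}}p(\theta_k^N)|\log p(\theta_k^N)|\le\int_{|x|>T}g_2\,dx+\log K\int_{|x|>T}p\,dx<(1+\log K)\varepsilon$, uniformly for $N$ large, and the same estimate bounds $h_N\sum_{\mathrm{tail}}|g_1(\theta_k^N)|$. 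Combining with the central part, $\limsup_N\bigl|h_N\sum_k g_i(\theta_k^N)-\int_\real g_i\,dx\bigr|\le\bigl|\int_\real g_i\,dx-\int_{-T}^{T}g_i\,dx\bigr|+(1+\log K)\varepsilon$ for $i=1,2$; letting $\varepsilon\downarrow0$ (so $T\uparrow\infty$) proves both limits in \eqref{firstlimits}.

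The step I expect to be the main obstacle is precisely this interchange of the limits $h_N\to0$ and $[x_0^N,x_{k_N}^N]\uparrow\real$: one needs a tail estimate for the Riemann sums that is uniform in $N$, and hypothesis $(ii)$ is exactly the tool supplying it --- a multiplicative modulus of continuity at scale $h_N$, valid throughout the range occupied by the partition, which lets one replace the tagged value $p(\theta_k^N)$ by neighbouring values of $p$ and hence dominate the tail of the sum by the convergent tail of $\int_\real p\,|\log p|\,dx$. A secondary point requiring care is the provisional balancing assumption $|\pi_N|\bigl(|x_0^N|\vee x_{k_N}^N\bigr)\le1$ used above: if the window extends beyond the scale $1/|\pi_N|$ on which $(ii)$ is applicable at mesh $h_N$, the finitely many intervals with $|\theta_k^N|>1/|\pi_N|$ must be treated separately, and one checks --- using $(ii)$ at a coarser scale together with the integrability in $(B)(i)$ --- that their combined contribution to both the Riemann sum and to $\int_\real g_i\,dx$ is $o(1)$; alternatively, this mild restriction on the admissible partitions can be folded into Assumption (A), being satisfied in all the examples above.
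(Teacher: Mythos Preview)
Your proof is correct and follows essentially the same strategy as the paper: apply the mean value theorem to write $\Delta F_k^N/\Delta x_k^N=p(\theta_k^N)$, split the sum into a central block on a fixed interval $[-T,T]$ (handled by continuity and Riemann-sum convergence) and a tail block (controlled via hypothesis $(ii)$, which gives $|\log p(\theta_k^N)-\log p(x)|\le\log K$ and hence domination of the tail sum by the convergent tail of $\int p|\log p|\,dx$). The paper's proof is organized as a direct estimate of $|\entropy_{SH}^{(1)}-\text{sum}|$ rather than as Riemann sums, but the ingredients are identical.

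One point worth noting: you explicitly flag the ``balancing'' constraint $|\pi_N|\bigl(|x_0^N|\vee x_{k_N}^N\bigr)\le1$ needed to invoke $(ii)$ across the entire tail at mesh $h_N$. The paper's proof imposes this implicitly, working throughout with the specific window $[-N,N]$ and mesh $1/N$ (so that $D=N$ and $|x-y|<1/N=1/D$); your observation that the non-balanced case is not covered by the argument as written is accurate, and your suggested remedy---absorb the balance condition into Assumption~(A)---is exactly what the paper effectively does. Your alternative sketch (``$(ii)$ at a coarser scale'') is not obviously sufficient without further hypotheses on $p$, so the cleaner resolution is indeed the one you propose.
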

\begin{remark} We wrote the relation \eqref{firstlimits} in its initial form, however, since the partitions are uniform, it can be simplified to 
\begin{equation*} 
 \entropy_{SH} = \lim_{N\to\infty} \sum_{k= 1}^{k_N} \Delta F_k^N  {\log \left(N {\Delta F_k^N} \right)},\,\,\text{and}\,\,\,   
\entropy_{SH}^{(1)} = \lim_{N\to\infty} \sum_{k= 1}^{k_N} \Delta F_k^N \abs{\log \left( {N\Delta F_k^N} \right)},
\end{equation*}
    
\end{remark}
\begin{proof} Both equalities in \eqref{firstlimits} are proved similarly. Since we are interested in alternative entropies, we shall prove the 2nd equality. 
For technical simplicity assume that $ p(x)>0$, $x\in \real$. 
Choose $\varepsilon > 0$ and $x_1 > 0$ such that
\[
\int_{\abs{x} \ge x_1} p(x) \,dx
+ \int_{\abs{x} \ge x_1} p(x) \abs{\log p(x)}\,dx < \varepsilon.
\]
Put $N > x_0 \vee x_1$ (then $\int_{\abs{x} \ge N} p(x) \,dx
+ \int_{\abs{x} \ge N} p(x) \abs{\log p(x)}\,dx < \varepsilon$), and consider the difference
\[
\Delta_N = \abs{\entropy_{SH}^1 - \sum_{k=1}^{k_N} \Delta F_k^N \left|\log \left(\frac{\Delta F_k^N}{\Delta x_k^N}\right)\right|}
= \abs{\entropy_{SH}^1 - \sum_{k=1}^{k_N} \Delta F_k^N \left|\log p\left(\theta_k^N\right)\right|},
\]
where $\theta_k^N \in [x_{k-1}^N, x_k^N]$.
Also, recall that $x_k^N-x_{k-1}^N=\frac1N$.
Denote $z=x_1 \vee x_0$. It is possible to bound $\Delta_N$ as follows: 
\begin{align*}
\Delta_N &\le \varepsilon+\abs{\int_{\abs{x} \le z} p(x) |\log p(x)|\, dx
- \sum_{k:\left[x_{k-1}^N, x_k^N\right] \cap [-z, z] \ne \emptyset} \Delta F_k^N \left|\log p\left(\theta_k^N\right)\right|}
\\
&\quad + \abs{\int_{z < \abs{x} < N } p(x) |\log p(x)|\,dx
- \sum_{k:\left[x_{k-1}^N, x_k^N\right] \cap [-z, z] = \emptyset} \Delta F_k^N \left|\log p\left(\theta_k^N\right)\right|}=\varepsilon+I_1^N+I_2^N,
\end{align*}
where $I_1^N$ is, in some sense, the main term, and $I_2^N$ is a reminder term. 
We start with  $I_1^N$. Obviously, 
\[
\sum_{k:\left[x_{k-1}^N, x_k^N\right] \cap [-z, z] \ne \emptyset} \Delta F_k^N \left|\log p\left(\theta_k^N\right)\right|
= \int_{x_{k_{z}^1-1}^N}^{x_{k_{z}^2}^N}
p(x) \left|\log p\left(\theta_{x}^N\right)\right| dx,
\]
where $x_{k_{z}^1-1}^N$ is the left endpoint of the first interval $[x_{k-1}^N, x_k^N]$ such that
$[x_{k-1}^N, x_k^N] \cap [-z, z] \ne \emptyset$,
  $x_{k_{z}^2}^N$ is the right endpoint of the last of such intervals if to consider them from the left to the right, and
$\theta_{x}^N = \theta_k^N$ if $x \in [x_{k-1}^N, x_k^N]$.
Note that for any $N>1$   we have that $x_{k_{z}^1-1}^N>z-1$ and $x_{k_{z}^2}^N<z+1.$
Then   continuity of $p(x)$ and condition $(i)$ supply the  existence of  $\delta_2>\delta_1>0$ such that for any $N>1$ 
and for all    $x\in [x_{k_{z}^1-1}^N, x_{k_{z}^2}^N]$ it holds that 
$\delta_2\ge p\left(\theta_{x}^N\right)\ge  \delta_1$,  and consequently, $$p(x) \left|\log p\left(\theta_{x}^N\right)\right|\le p(x) (|\log(\delta_1)|\vee|\log(\delta_2)|).$$
 
Also,  $x_{k_{z}^1-1}^N\uparrow -z$ and $x_{k_{z}^2}^N\downarrow z$ as $N\to\infty$ because $|x_{k_{z}^1-1}^N+z|\le |\pi_N|$ and $|x_{k_{z}^2}^N-z|\le |\pi_N|$.  In turn, it means that
\begin{equation}\label{convint}
 \int_{x_{k_{z}^1-1}^N}^{x_{k_{z}^2}^N}
p(x) \log p\left(\theta_{x}^N\right) dx 
\to \int_{-z}^{z}
p(x) \log p(x) dx,   
\end{equation}

as $N\to\infty$, where we applied the convergence
\begin{gather*}
\left[x_{k_{z}^1-1}^N, x_{k_{z}^2}^N\right]
\to [-z, z],\,\,
 p(x) |\log p\left(\theta_{x}^N\right)| 
\to p(x) |\log p(x)|
\end{gather*}
and the Lebesgue dominated convergence theorem.

Now, consider the remainder term $I_2^N$. It can be divided into two parts and bounded as follows:
\[
I_2^N \le I_{21}^N + I_{22}^N,
\]
where
\begin{align*}
I_{21}^N & = \abs{\int_{z < x < N } p(x) |\log p(x)|\,dx
- \sum_{k:\left[x_{k-1}^N, x_k^N\right] \subset [z, N]} \Delta F_k^N \left|\log p\left(\theta_k^N\right)\right|},
\\
I_{22}^N & = \abs{\int_{-N < x <- z} p(x) |\log p(x)|\,dx
- \sum_{k:\left[x_{k-1}^N, x_k^N\right] \subset [-N, -z]} \Delta F_k^N \left|\log p\left(\theta_k^N\right)\right|}.
\end{align*}

Let us bound $I_{21}^N$, and $I_{22}^N$ can be considered similarly.
Let $x_{(z)}^N$ denote the left endpoint of the first interval $[x_{k-1}^N, x_k^N]$ such that
$[x_{k-1}^N, x_k^N] \subset [z, N]$,
and let $x_{(N)}^N$ denote the right endpoint of the last of such intervals, if to consider them from the left to the right.
As before, $\theta_x^N = \theta_k^N$ if $x \in [x_{k-1}^N, x_k^N]$.
Then
\[
I_{21}^N \le 2\varepsilon + \abs{\int_{x_{(z)}^N}^{x_{(N)}^N} p(x) |\log p(x)|\,dx
- \int_{x_{(z)}^N}^{x_{(N)}^N} p(x) \abs{\log p\left(\theta_x^N\right)}\,dx}
= 2 \varepsilon + I_3^N,
\]
where we take into account that
\begin{align*}
\MoveEqLeft
\abs{\int_{z < x < N} p(x) |\log p(x)|\,dx
- \int_{x_{(z)}^N}^{x_{(N)}^N} p(x) |\log p(x)|\,dx}
\\
&\le \int_{z < x < x_{(z)}^N} p(x) |\log p(x)|\,dx
+ \int_{x_{(N)}^N}^N p(x) |\log p(x)|\,dx
< 2 \varepsilon.
\end{align*}
Furthermore,
\[
I_3^N \le \int_{x_{(z)}^N}^{x_{(N)}^N} p(x) \abs{\log \frac{p(x)}{p\left(\theta_x^N\right)}}\,dx.
\]
Now, according to condition $(ii)$,
\[
p(x) \le K p\left(\theta_x^N\right)
\quad\text{and}\quad
p\left(\theta_x^N\right) \le K p(x),
\]
because
$N > x_0$, $|\theta_x^N - x| < \frac1N$, $\theta_x^N \le N$, $x \le N$. 
Therefore,
\[
I_3^N \le  |\log K|  \int_{x_{(z)}^N}^{x_{(N)}^N} p(x)\,dx
\le |\log K| \varepsilon,
\]
because $x_{(z)}^N > x_1$.

Since $\varepsilon > 0$ is arbitrary, the proof follows.
\end{proof}
\begin{remark} Condition $(ii)$ is not as sophisticated as it seems. Of course, it holds for the densities with compact support. It also holds, for example, for the Gaussian  distribution. Indeed, let
\[
p(x) = \frac{1}{\sigma\sqrt{2\pi}} \exp\set{- \frac{(x-m)^2}{2\sigma^2}},
\quad m\in\real, \; \sigma > 0, \; x\in \real.
\]
Consider the inequality
\[
p(x) \le K p(y)
\]
or, that is the same,
\begin{equation}\label{eq:ineq1}
\exp\set{- \frac{(x-m)^2}{2\sigma^2}}
\le K \exp\set{- \frac{(y-m)^2}{2\sigma^2}}.
\end{equation}
Inequality \eqref{eq:ineq1} is equivalent to the following one:
\begin{equation}\label{eq:ineq2}
(y - m)^2 - (x - m)^2 \le 2\sigma^2 \log K.
\end{equation}
Of course, \eqref{eq:ineq2} will hold if
\begin{equation}\label{eq:ineq3}
\abs{y - x} \bigl(\abs{x} + \abs{y} + 2\abs{m}\bigr)
\le 2\sigma^2 \log K.
\end{equation}
Taking into account that we should consider
$\abs{y - x} \le \frac{1}{D}$ and $\abs{x} \le D$, $\abs{y} \le D$,
we see that \eqref{eq:ineq3} will be satisfied if
\[
\frac{1}{D} (2D + 2\abs{m}) \le 2\sigma^2 \log K,
\]
or
\[
1 + \frac{\abs{m}}{D} \le \sigma^2 \log K.
\]
If we put $x_0 =|m|$  and $K = \exp\{\frac{2}{\sigma^2}\}$, then assumption $(ii)$ is fulfilled. Moreover, in fact, inequality $p(x) \le K p(y)$ will be fulfilled for all 
$|x|,|y|\in(0, D) $      if $|x-y|<1/D$ and  $D>|m|$.

The case of exponential distribution with mean $\mu > 0$ is even simpler. In this case inequality $p(x) \le K p(y)$, or, that is the same, $\mu^{-1} e^{- x/\mu}\le K\mu^{-1} e^{- y/\mu}$ is fulfilled for $x>y$ with $K=1$ and for $0<y-x<D$ if $D>x_0=1/\mu$ and $K=e$.

Another example: assume that there exists $x_1 > 0$ such that $p(x)$ is increasing on $(-\infty, -x_1)$, decreasing on $(x_1, \infty)$, and for any $t \in \real$
\[
\lim_{\abs{x} \to \infty} \frac{p(x+t)}{p(x)} = 1.
\]

Consider, for example, $x > x_1$ and $y > x_1$ and write the inequality
\[
p(x) \le K p(y).
\]
Of course, it holds for $x > y$ with $K = 1$.
Therefore, let $x < y$.
Choose $t = 1$ and $x_2 > 0$ such that
\[
\frac{p(x+1)}{p(x)} > \frac12
\quad\text{for all } x > x_2.
\]
Now, choose
$D > x_1 \vee x_2 \vee 1$.
Then for $y - x < \frac1D$
\[
\frac{p(y)}{p(x)} > \frac{p\left(x + \frac1D\right)}{p(x)}
> \frac{p(x + 1)}{p(x)} > \frac12,
\]
whence $p(x) < 2p(y)$. The example of such density: $p(x)=C_1(1+x^2)^{-1}$.

Assumption $(ii)$ is not fulfilled, for example, for
\[
p(x) = C_2 e^{-x^4}, \quad x \in \real,
\]
because in this case inequality
\[
p(x) \le K p(y)
\]
is equivalent to
\[
y^4 - x^4 \le \log K,
\]
or
\begin{equation}\label{equ.11}
\abs{y-x} \abs{y+x} \left(x^2 + y^2\right) \le \log K.
\end{equation}
If $\abs{y - x} \le \frac1D$ and $\abs{y} \le D$, $\abs{x} \le D$, we still have in the left-hand side of \eqref{equ.11} the value $x^2 + y^2$ that can increase to $+\infty$.
It does not mean that it is impossible to construct a prelimit sum of the form $\sum_{k= 1}^{k_N} \Delta F_k^N \abs{\log \left(\frac{\Delta F_k^N}{\Delta x_k^N}\right)}$ that will converge to $\entropy_{SH}^{(1)}$, but it is necessary to consider  partition of diameter $N^{-3}$ instead of $N^{-1}$.
\end{remark}
The next result can be  proved by the same steps as Theorem \ref{theor1}, therefore we omit the proof. 
\begin{theorem} Let the function $p = p(x)$, $x \in \real$ be a density of probability distribution, satisfying the assumptions $(B)$.
 Then, for the sequence of uniform partitions  satisfying  Assumption (A), 
\[
\entropy_{SH}^{(2)} = \lim_{N\to\infty} \sum_{k= 1}^{k_N} \Delta F_k^N \left(\log \left(\frac{\Delta F_k^N}{\Delta x_k^N}\right)\right)_+,
\]
\[
\entropy_{SH}^{(3)} = \lim_{N\to\infty} \sum_{k= 1}^{k_N} \Delta F_k^N \left(\log \left(\frac{\Delta F_k^N}{\Delta x_k^N}\right)+1\right),
\]
where we put $\log \left(\frac{\Delta F_k^N}{\Delta x_k^N} \right)=0$ if $\Delta F_k^N=0$.
\end{theorem}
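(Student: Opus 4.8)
The plan is to reuse the proof of Theorem~\ref{theor1} almost verbatim. The only new ingredient is that the two transformations occurring here are $1$-Lipschitz after composition with the logarithm, exactly as $u\mapsto\abs{u}$ was in Theorem~\ref{theor1}, and every estimate in that proof used that map only through its Lipschitz bound and at most linear growth. Assume, as in Theorem~\ref{theor1}, that $p(x)>0$ for all $x\in\real$, and write each prelimit sum, after the mean value theorem $\Delta F_k^N/\Delta x_k^N=p(\theta_k^N)$ with $\theta_k^N\in[x_{k-1}^N,x_k^N]$, in the unified form $\sum_{k=1}^{k_N}\Delta F_k^N\, g\!\bigl(\log p(\theta_k^N)\bigr)$, where $g(u)=(-u)_+$ for $\entropy_{SH}^{(2)}$ and $g(u)=\log\!\bigl(e^{-u}+1\bigr)$ for $\entropy_{SH}^{(3)}$ (compare $g(u)=\abs{u}$ for $\entropy_{SH}^{(1)}$ in Theorem~\ref{theor1}); with these choices $\int_\real p(x)\,g(\log p(x))\,dx$ is precisely $\entropy_{SH}^{(2)}$, respectively $\entropy_{SH}^{(3)}$. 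In both cases $\abs{g(u)-g(v)}\le\abs{u-v}$ and $\abs{g(u)}\le\abs{u}+c$ with $c=0$ and $c=\log 2$ respectively, so $\abs{g(\log p)}\le\abs{\log p}+c$; hence $(B)(i)$ guarantees that $\entropy_{SH}^{(i)}$ is finite and that the tails of $p\,g(\log p)$ are controlled by those of $p$ and $p\abs{\log p}$.

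With this in hand I would repeat the three blocks of the proof of Theorem~\ref{theor1}. Fix $\varepsilon>0$ and $x_1>0$ with $\int_{\abs{x}\ge x_1}\bigl(p(x)+p(x)\abs{\log p(x)}\bigr)\,dx<\varepsilon$, take $N>x_0\vee x_1$, put $z=x_0\vee x_1$, and split the difference between $\entropy_{SH}^{(i)}$ and the prelimit sum into: a tail term bounded by $C\varepsilon$, using the domination just noted both for the integral over $\abs{x}\ge N$ and for the sum over the partition intervals contained there; a ``core'' term $I_1^N$ over the intervals meeting $[-z,z]$; and a ``remainder'' term $I_2^N=I_{21}^N+I_{22}^N$ over the intervals inside $z<x<N$ and $-N<x<-z$.

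For $I_1^N$: on a fixed bounded neighbourhood of $[-z,z]$, continuity and positivity of $p$ give $0<\delta_1\le p\le\delta_2$, so $g(\log p(\theta_x^N))$ is bounded and, by continuity of $g$, converges pointwise to $g(\log p(x))$; dominated convergence then yields $I_1^N\to0$, exactly as in Theorem~\ref{theor1}. For $I_2^N$: by $(B)(ii)$, since $N>x_0$, $\abs{\theta_x^N-x}\le1/N$ and $\abs{x},\abs{\theta_x^N}\le N$, one gets $\abs{\log p(x)-\log p(\theta_x^N)}\le\abs{\log K}$, whence, by the $1$-Lipschitz property,
\[
\abs{g(\log p(x))-g(\log p(\theta_x^N))}\le\abs{\log p(x)-\log p(\theta_x^N)}\le\abs{\log K}.
\]
Integrating against $p$ over $z<\abs{x}<N$ and adding the $O(\varepsilon)$ produced by the partition intervals straddling $\pm z$ and $\pm N$ — just as in the bound on $I_3^N$ in Theorem~\ref{theor1} — gives $I_2^N\le\bigl(\abs{\log K}+C\bigr)\varepsilon$ with $C$ independent of $N$. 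Letting $\varepsilon\downarrow0$ finishes the proof.

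The only step that needs any thought — the \emph{main obstacle} in this sketch — is checking that replacing $u\mapsto\abs{u}$ by $u\mapsto(-u)_+$, respectively by $u\mapsto\log(e^{-u}+1)$, does not damage the two estimates that actually power the proof of Theorem~\ref{theor1}: the domination used for the tails and for $I_1^N$, and the Lipschitz-type bound used for $I_3^N$. Neither is damaged, since both new maps are $1$-Lipschitz with at most linear growth; in particular no hypothesis beyond $(B)$ is needed, and the lack of differentiability of $u\mapsto(-u)_+$ at the origin is irrelevant, as only its Lipschitz bound and pointwise continuity enter.
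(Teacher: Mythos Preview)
Your approach is exactly what the paper does: it omits the proof entirely, stating that it ``can be proved by the same steps as Theorem~\ref{theor1}.'' Your write-up is in fact more informative than the paper's, since you isolate the property that makes the transfer work (the maps $u\mapsto(-u)_+$ and $u\mapsto\log(e^{-u}+1)$ are $1$-Lipschitz with at most linear growth, so the domination for $I_1^N$ and the $\log K$ bound for $I_3^N$ go through unchanged).

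One remark worth making explicit: you have silently corrected the prelimit functionals. As printed, the theorem has $\bigl(\log(\Delta F_k^N/\Delta x_k^N)\bigr)_+$ for $\entropy_{SH}^{(2)}$ and $\log(\Delta F_k^N/\Delta x_k^N)+1$ for $\entropy_{SH}^{(3)}$, whose limits would be $\int p\,(\log p)_+$ and $1-\entropy_{SH}$, not $\entropy_{SH}^{(2)}$ and $\entropy_{SH}^{(3)}$. Your choices $g(u)=(-u)_+$ and $g(u)=\log(e^{-u}+1)$ correspond to prelimits $\bigl(-\log(\Delta F_k^N/\Delta x_k^N)\bigr)_+$ and $\log\bigl((\Delta F_k^N/\Delta x_k^N)^{-1}+1\bigr)$, which are clearly what is intended. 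It would be worth flagging this discrepancy rather than passing over it.
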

\section{The behavior of alternative versions of differential Shannon entropy as the  functions of parameters of distributions}\label{sec5.behav}    
As we already claimed, a significant advantage of alternative entropies is their strict positivity. Now let us analyze their behavior as functions of the parameters of some distributions and compare them with the corresponding behavior of the standard Shannon entropy.
\subsection{Gaussian distribution} 
Consider Gaussian  distribution with zero mean, for technical simplicity.
So, let 
\begin{equation}\label{eq:norm-pdf}
p_0(x) = \frac{e^{-\frac{x^2}{2 \sigma^2}}}{\sigma \sqrt{2\pi}},
\quad x \in \real, \; \sigma > 0.
\end{equation}
Recall that
\[
\entropy_{SH} \bigl(\{p_0(x), x \in \real\}\bigr) = \frac12(1+  \log 2\pi)+\log \sigma,
\]
and therefore, as the function of $\sigma$, it increases from $-\infty$ to $+\infty$ as $\sigma$ increases from 0 to $\infty$. Monotonicity is a convenient property, while, as we said, negative entropy or zero entropy of the non-degenerate distribution is not a logical phenomenon. Now let us consider the behavior of
$\entropy_{SH}^{(i)}(\{p_0(x), x \in \real\})$
as functions of $\sigma$ and clarify  advantages and disadvantages of these alternative entropies.

\begin{proposition}
\leavevmode
\begin{enumerate}[1)]
\item 
$\entropy_{SH}^{(1)}(\{p_0(x), x \in \real\})$ decreases in $\sigma \in (0, \sigma_0)$ and increases in $\sigma\in(\sigma_0,+\infty)$, where $\sigma_0 \approx 0.317777$ is the unique value for which
\[
\int_0^{-\log\left(\sigma_0\sqrt{2\pi}\right)} \frac{e^{-z}}{\sqrt{z}}\,dz
= \int_{-\log\left(\sigma_0\sqrt{2\pi}\right)}^{\infty} \frac{e^{-z}}{\sqrt{z}}\,dz.
\]
(Obviously,  $-\log\left(\sigma_0\sqrt{2\pi}\right)>0$.)
\item 
$\entropy_{SH}^{(i)}(\{p_0(x), x \in \real\}),\,i=2,3,4$ strictly increase in $\sigma>0$ from 0 to $+\infty$.

\end{enumerate}
\end{proposition}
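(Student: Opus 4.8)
The plan is to reduce all four functionals to expectations of a single real variable. Fix $\sigma>0$ and set $c=c(\sigma):=-\log(\sigma\sqrt{2\pi})$, so that $-\log p_0(x)=\frac{x^2}{2\sigma^2}-c$ and $\sigma\mapsto c(\sigma)$ is a strictly decreasing $C^\infty$ bijection of $(0,\infty)$ onto $\real$ with $c(1/\sqrt{2\pi})=0$. Substituting $z=\frac{x^2}{2\sigma^2}$ on each half-line, the law of $\frac{X^2}{2\sigma^2}$ for $X\sim\Normal(0,\sigma^2)$ becomes the $\Gamma(1/2,1)$ distribution with density $\frac1{\sqrt\pi}z^{-1/2}e^{-z}\ind\{z>0\}$, which does not depend on $\sigma$. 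Writing $Z$ for a variable with this law and $F_Z$ for its distribution function, I would record the representations
\[
\entropy_{SH}^{(1)}=\mathbb E\,\abs{Z-c},\qquad
\entropy_{SH}^{(2)}=\mathbb E\,(Z-c)_+,\qquad
\entropy_{SH}^{(3)}=\mathbb E\,\log\bigl(1+e^{\,Z-c}\bigr),
\]
all finite by the standing assumption, while a one-line computation (with $M=p_0(0)=1/(\sigma\sqrt{2\pi})$, so $p_0(x)/M=e^{-x^2/2\sigma^2}$ and $\log(M/p_0(x))=x^2/2\sigma^2$, then $x=\sigma t$) gives $\entropy_{SH}^{(4)}(\sigma)=\sigma\sqrt{\pi/2}$. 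It then suffices to study each of the three expectations as a function of $c$ and compose with the decreasing bijection $c(\sigma)$.

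For 1), I would use that $\varphi_1(c):=\mathbb E\,\abs{Z-c}$ is finite and convex (pointwise convexity of $c\mapsto\abs{z-c}$), and that, writing $\abs{z-c}=(c-z)_++(z-c)_+$ and differentiating under the expectation (dominated convergence on the difference quotients), $\varphi_1'(c)=\mathbb P(Z\le c)-\mathbb P(Z>c)=2F_Z(c)-1$. Since $Z$ has a density strictly positive on $(0,\infty)$, $\varphi_1'$ is strictly increasing, equals $-1$ for $c\le0$, and tends to $1$ as $c\to\infty$; hence it vanishes at a unique point $c_0>0$ with $F_Z(c_0)=\tfrac12$, i.e. $\int_0^{c_0}z^{-1/2}e^{-z}\,dz=\int_{c_0}^{\infty}z^{-1/2}e^{-z}\,dz$. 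Therefore $\varphi_1$ strictly decreases on $(-\infty,c_0]$ and strictly increases on $[c_0,\infty)$, and with $\sigma_0:=c^{-1}(c_0)=\frac1{\sqrt{2\pi}}e^{-c_0}$ the monotonicity of $c(\sigma)$ yields that $\entropy_{SH}^{(1)}$ decreases on $(0,\sigma_0)$ and increases on $(\sigma_0,\infty)$; the equation defining $\sigma_0$ in the statement is precisely $F_Z(c_0)=\tfrac12$ after the substitution $c_0=-\log(\sigma_0\sqrt{2\pi})$. The value $c_0$ is the median of $Z$, equivalently half the square of the $0.75$-quantile $\Phi^{-1}(3/4)\approx0.6745$ of $\Normal(0,1)$, so $c_0\approx0.2276$ and $\sigma_0\approx0.317777$.

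For 2), $\varphi_2(c):=\mathbb E\,(Z-c)_+$ satisfies $\varphi_2'(c)=-\mathbb P(Z>c)<0$ for every $c$, so it is strictly decreasing; since $(Z-c)_+\le Z$ for $c\ge0$ and $\mathbb E Z=\tfrac12$, dominated convergence gives $\varphi_2(c)\to0$ as $c\to+\infty$, while $\varphi_2(c)=\tfrac12-c\to+\infty$ as $c\to-\infty$. Composing with $c(\sigma)$, $\entropy_{SH}^{(2)}$ strictly increases from $0$ to $+\infty$. Similarly $\varphi_3(c):=\mathbb E\,\log(1+e^{Z-c})$ has $\varphi_3'(c)=-\mathbb E\bigl[e^{Z-c}/(1+e^{Z-c})\bigr]<0$; the bound $\log(1+e^{Z-c})\le\log(1+e^{Z})\le\log2+Z$ for $c\ge0$ gives $\varphi_3(c)\to0$ as $c\to+\infty$, and $\log(1+e^{Z-c})\ge Z-c$ gives $\varphi_3(c)\to+\infty$ as $c\to-\infty$, so $\entropy_{SH}^{(3)}$ increases from $0$ to $+\infty$. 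Finally $\entropy_{SH}^{(4)}(\sigma)=\sigma\sqrt{\pi/2}$ is a strictly increasing bijection of $(0,\infty)$ onto $(0,\infty)$. The only genuinely delicate point is 1): recognising that the minimiser of $\mathbb E\,\abs{Z-c}$ is the median of the $\Gamma(1/2,1)$ law and matching its characterisation and numerical value with the statement; everything else reduces to sign-of-derivative arguments and two dominated-convergence limits.
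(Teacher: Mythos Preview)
Your proof is correct and follows essentially the same route as the paper: both perform the substitution $z=x^2/(2\sigma^2)$ to reduce each entropy to a function of the single parameter $u=\log(\sigma\sqrt{2\pi})$ (your $-c$) and then differentiate. Your probabilistic packaging---writing $\entropy_{SH}^{(1)}=\mathbb E\,|Z-c|$ for $Z\sim\Gamma(1/2,1)$ and recognising the minimiser as the median of $Z$, hence $c_0=\tfrac12\bigl(\Phi^{-1}(3/4)\bigr)^2$---is a slightly cleaner way to arrive at the same derivative computation the paper carries out directly via $f'$ and $f''$.
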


\begin{proof}
1) Note that
\begin{align*}
\entropy_{SH}^{(1)}(\{p_0(x), x \in \real\})
&= \int_\real \frac{e^{-\frac{x^2}{2 \sigma^2}}}{\sigma \sqrt{2\pi}}
\abs{-\frac{x^2}{2 \sigma^2} - \log\left(\sigma \sqrt{2\pi}\right)} dx
\\
&= 2 \int_0^\infty \frac{e^{-\frac{x^2}{2 \sigma^2}}}{\sigma \sqrt{2\pi}}
\abs{\frac{x^2}{2 \sigma^2} + \log\left(\sigma \sqrt{2\pi}\right)} dx
\\
&= \biggl| \frac{x}{\sigma \sqrt2} = y \biggr|
= \frac{2}{\sqrt\pi} \int_0^\infty e^{-y^2}
\abs{y^2 + \log\left(\sigma \sqrt{2\pi}\right)} dy
\\
&= \biggl| y^2 = z \biggr|
= \frac{1}{\sqrt\pi} \int_0^\infty \frac{e^{-z}}{\sqrt{z}}
\abs{z + u} dz
\eqqcolon f(u),
\end{align*}
where $u = \log(\sigma \sqrt{2\pi})$. Now it is sufficient to investigate monotonicity of $f$ in $u$.

If $u > 0$, i.e., $\sigma > \frac{1}{\sqrt{2\pi}}$,
$f(u) = \frac{1}{\sqrt\pi} \int_0^\infty \frac{e^{-z}}{\sqrt{z}}
(z + u) dz$
strictly increases in $u$ from
$f(0) = \frac{1}{\sqrt\pi} \int_0^\infty e^{-z} \sqrt{z}\,
dz = \frac{\Gamma(3/2)}{\sqrt\pi} = \frac12$
to $+\infty$.

Now, let $u \le 0$, i.e., $\sigma \le \frac{1}{\sqrt{2\pi}}$.
Then
\[
f(u) = \frac{1}{\sqrt\pi} \int_0^{-u} \frac{e^{-z}}{\sqrt{z}} (-z - u) \,dz + \frac{1}{\sqrt\pi} \int_{-u}^\infty \frac{e^{-z}}{\sqrt{z}} (z + u)\, dz.
\]
Therefore
\[
f'(u) = - \frac{1}{\sqrt\pi} \int_0^{-u} \frac{e^{-z}}{\sqrt{z}}\,dz + \frac{1}{\sqrt\pi} \int_{-u}^\infty \frac{e^{-z}}{\sqrt{z}} \, dz,
\]
and
\[
f''(u) = \frac{2}{\sqrt\pi}\,\frac{e^u}{\sqrt{-u}} > 0,\,u<0.
\]
It means that $f'(u)$ strictly increases in $u \in (-\infty, 0)$ from
$f'(-\infty) = - \frac{1}{\sqrt\pi} \int_{0}^\infty \frac{e^{-z}}{\sqrt{z}} \, dz = - \frac{\Gamma(1/2)}{\sqrt\pi} = -1$
to
$f'(0) = \frac{1}{\sqrt\pi} \int_{0}^\infty \frac{e^{-z}}{\sqrt{z}} \, dz = 1$ having only one zero value inside, and it will be the point of minimum.

In turn, it means that $f(u)$ decreases from $+\infty$ to $f(u_0)$, where $u_0$ is defined as the unique value for which
\[
\int_0^{-u_0} \frac{e^{-z}}{\sqrt{z}}\,dz = \int_{-u_0}^\infty \frac{e^{-z}}{\sqrt{z}} \, dz.
\]
Solving this equation numerically, we get $u_0 \approx -0.227468$, which corresponds to $\sigma_0 = e^{u_0}/\sqrt{2\pi} \approx 0.317777$.

 Finally, it means that entropy $\entropy_{SH}^{(1)}(\{p_0(x), x \in \real\})$ decreases from $+\infty$ to $0.428674$ when $\sigma$ increases from $0$ to $\sigma_0$ and increases from $0.428674$ to $+\infty$ when $\sigma$ increases from   $\sigma_0$ to $+\infty.$
 \begin{figure}
     \centering
     \includegraphics[width=0.7\linewidth]{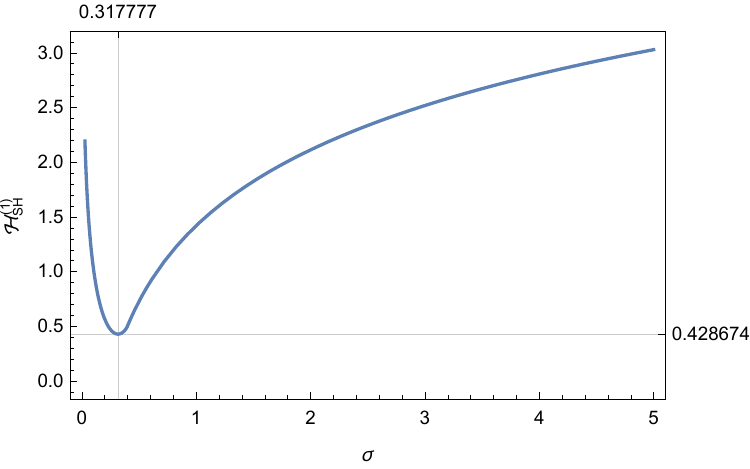}
     \caption{$\entropy_{SH}^{(1)}$ for Gaussian distribution as a function of $\sigma$}
     \label{fig:H1-gauss}
 \end{figure}
 
2) These cases are similar to each other and simpler than 1).
Indeed, for $u = \log (\sigma \sqrt{2\pi})$
\[
\entropy_{SH}^{(2)} \bigl(\{p_0(x), x \in \real\}\bigr)
= \int_\real \frac{e^{-\frac{x^2}{2\sigma^2}}}{\sigma\sqrt{2\pi}}
\left(\frac{x^2}{2\sigma^2} + u\right)_+ dx
= \frac{1}{\sqrt\pi}\int_0^\infty \frac{e^{-z}}{\sqrt{z}}
\left(z + u\right)_+ dz.
\]
Obviously, the value $(z + u)_+$ and consequently, the integral strictly increase in $u$ (and so in $\sigma > 0$), and integral increases from 0 to $+\infty$.

Similarly,
\[
\entropy_{SH}^{(3)} \bigl (\{p_0(x), x \in \real\}\bigr)
= \int_\real \frac{e^{-\frac{x^2}{2\sigma^2}}}{\sigma\sqrt{2\pi}} \log\left(\sigma\sqrt{2\pi} e^{\frac{x^2}{2\sigma^2}} + 1\right) dx
= \frac{1}{\sqrt\pi}\int_0^\infty \frac{e^{-z}}{\sqrt{z}} \log\left(\sigma\sqrt{2\pi} e^z + 1\right) dz,
\]
and this function also strictly increases in $\sigma > 0$, from 0 to $+\infty$.

Entropy $\entropy_{SH}^{(4)} (\{p_0(x), x \in \real\})$ was calculated in \cite{BMR}, it equals $\sigma \sqrt{\frac{\pi}{2}}$.
\end{proof}

\begin{figure}[h]
     \centering
     \includegraphics[width=0.7\linewidth]{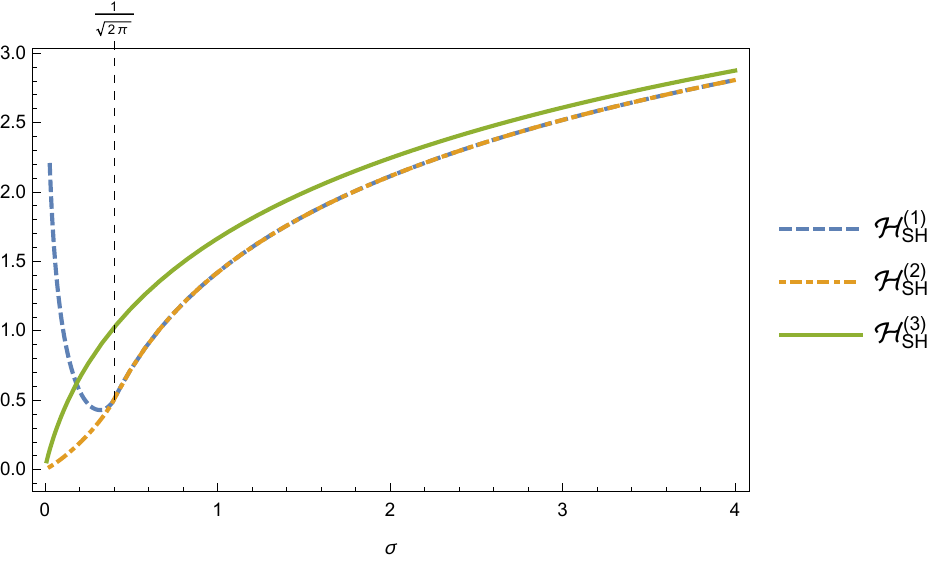}
     \caption{$\entropy_{SH}^{(i)}$, $i = 1,2,3$, for Gaussian distribution as functions of $\sigma$}
     \label{fig:H123-gauss}
 \end{figure}
 
\begin{remark}
1) It is clear that
$\entropy_{SH}^{(1)} (\{p_0(x), x \in \real\}) = \entropy_{SH}^{(2)} (\{p_0(x), x \in \real\})$ for $\sigma > \frac{1}{\sqrt{2\pi}}$.

2) Advantages of all entropies $\entropy_{SH}^{(i)} (\{p_0(x), x \in \real\})$ are their positive values.

Disadvantage of $\entropy_{SH}^{(1)} (\{p_0(x), x \in \real\})$ is the fact that it admits the same value for two different variances.
Therefore, having the value of this entropy we should have some additional information about   $\sigma$ in order to distinguish these two values.

Advantages of $\entropy_{SH}^{(i)} (\{p_0(x), x \in \real\})$, $i =2,3,4$, is their strict increasing in $\sigma > 0$.
\end{remark}

\subsection{Exponential distribution} Consider exponential distribution with the density $p_1(x) = \mu^{-1} e^{- x/\mu}$, $x\ge 0$, $\mu > 0$, in the same spirit as Gaussian distribution. Recall that its standard Shannon entropy equals
\[
\entropy_{SH} (\{p_1(x), x\ge 0\}) = -\int_0^{\infty} \mu^{-1} e^{-x/\mu} \log \left(\mu^{-1} e^{-x/\mu}\right) dx
= \int_0^\infty e^{-y}(\log \mu + y)\,dy = 1 + \log \mu,
\]
and increases from $-\infty$ to $+\infty$ when $\mu$ increases from $0$ to $+\infty$. 
Since the next statements are the results of the straightforward calculations, we omit the proofs.
\begin{proposition} 
Let $p_1(x) = \mu^{-1} e^{-x/\mu}$, $x\ge 0$, $\mu > 0$.
\begin{enumerate}[1)]
\item $\displaystyle \entropy_{SH}^{(1)} \bigl(\{p_1(x), x\ge 0\}\bigr)
= \int_0^\infty \mu^{-1} e^{-x/\mu} \abs{\log \left(\mu^{-1} e^{- x/\mu}\right)} dx
= \begin{cases}
2\mu -\log\mu - 1, & \mu \le 1, \\
1 + \log\mu, & \mu > 1, 
\end{cases}$
it decreases from $+\infty$ to $\log 2$, when $\mu$ increases from 0 to $1/2$ and increases from $\log 2$ to $+\infty$ when $\mu$ increases from $1/2$ to $+\infty$.

\item $\displaystyle \entropy_{SH}^{(2)} \bigl(\{p_1(x), x \ge 0\}\bigr)
= \int_0^\infty \mu^{-1} e^{-x/\mu}\left(-\log \left(\mu^{-1} e^{-x/\mu}\right)\right)_+ dx
= \begin{cases}
\mu, & \mu \le 1,\\
1 + \log \mu, & \mu > 1,
\end{cases}$
and it increases from 0 to $+\infty$ when $\mu$ increases from 0 to $+\infty$.

\item $\displaystyle \entropy_{SH}^{(3)} \bigl(\{p_1(x), x\ge 0\}\bigr)
= \int_0^\infty \mu^{-1} e^{- x/\mu}\log \left(\mu e^{x/\mu} + 1\right) dx
= \log (\mu + 1) + \mu \log \left(\frac1\mu+1\right)$
and increases from 0 to $+\infty$ when $\mu$ increases from 0 to $+\infty$

\item $\displaystyle \entropy_{SH}^{(4)} \bigl(\{p_1(x), x\ge 0\}\bigr)
= \int_0^\infty e^{-x/\mu} \log e^{x/\mu} \, dx
= \mu$
and increases from 0 to $+\infty$ when $\mu$ increases from 0 to $+\infty$.
\end{enumerate}
\end{proposition}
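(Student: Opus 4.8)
\emph{Proof proposal.} The plan is to reduce every integral to an elementary one by means of the substitution $y = x/\mu$ together with the identity $\log p_1(x) = -\log\mu - x/\mu$, i.e.\ $-\log p_1(x) = \log\mu + x/\mu$, which is nonnegative precisely when $x \ge -\mu\log\mu$; here $-\mu\log\mu > 0$ when $\mu < 1$ and $-\mu\log\mu \le 0$ when $\mu \ge 1$. Throughout I will use $\int_0^\infty e^{-y}\,dy = 1$, $\int_0^\infty y e^{-y}\,dy = 1$ and the antiderivative $\int y e^{-y}\,dy = -(y+1)e^{-y}$.

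For items 1) and 2) I would split into two cases. If $\mu \ge 1$, then $-\log p_1(x) = \log\mu + x/\mu \ge 0$ on $[0,\infty)$, so both $\abs{\log p_1(x)}$ and $(-\log p_1(x))_+$ equal $\log\mu + x/\mu$, and after $y = x/\mu$ one gets $\int_0^\infty e^{-y}(\log\mu + y)\,dy = 1 + \log\mu$. If $0 < \mu < 1$, set $a \coloneqq -\log\mu > 0$; the sign of $\log\mu + x/\mu$ changes at $x = \mu a$, so the substitution $y = x/\mu$ leaves $\int_0^a e^{-y}(a - y)\,dy + \int_a^\infty e^{-y}(y - a)\,dy$ for $\entropy_{SH}^{(1)}$ and only the second of these for $\entropy_{SH}^{(2)}$. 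Both integrals are elementary: $\int_a^\infty e^{-y}(y - a)\,dy = e^{-a}$ and $\int_0^a e^{-y}(a - y)\,dy = a - 1 + e^{-a}$. Substituting back $e^{-a} = \mu$, $a = -\log\mu$ yields $\entropy_{SH}^{(1)} = 2\mu - \log\mu - 1$ and $\entropy_{SH}^{(2)} = \mu$ for $\mu \le 1$, which completes both piecewise formulas (the two branches agree at $\mu = 1$). Monotonicity then follows by differentiating the closed forms: on $(0,1)$ one has $(2\mu - \log\mu - 1)' = 2 - \mu^{-1}$, vanishing only at $\mu = \tfrac12$, where the second derivative $\mu^{-2}$ is positive, so the minimum equals $2\cdot\tfrac12 - \log\tfrac12 - 1 = \log 2$; on $(1,\infty)$ the function $1 + \log\mu$ is increasing; and for $\entropy_{SH}^{(2)}$ both branches ($\mu$ and $1 + \log\mu$) are increasing and match at $\mu = 1$. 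The limits as $\mu \to 0^+$ and $\mu \to \infty$ are immediate from the formulas.

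For item 3) the substitution $y = x/\mu$ reduces the problem to evaluating $\int_0^\infty e^{-y}\log(\mu e^y + 1)\,dy$. Writing $\mu e^y + 1 = \mu e^y(1 + \mu^{-1}e^{-y})$ splits the logarithm as $\log\mu + y + \log(1 + \mu^{-1}e^{-y})$; the first two terms contribute $\log\mu + 1$, while for the third I would substitute $t = e^{-y}$ and then $s = t/\mu$ to obtain $\int_0^1 \log(1 + t/\mu)\,dt = (\mu + 1)\log(1 + \mu^{-1}) - 1$, using $\int \log(1+s)\,ds = (1+s)\log(1+s) - (1+s)$. Adding up gives $\entropy_{SH}^{(3)} = \log\mu + (\mu + 1)\log(1 + \mu^{-1})$, which rearranges to $\log(\mu + 1) + \mu\log(\mu^{-1} + 1)$; its derivative equals $\log(1 + \mu^{-1}) > 0$, and it tends to $0$ as $\mu \to 0^+$ and to $+\infty$ as $\mu \to \infty$. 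For item 4), since $p_1$ is decreasing we have $M = \sup_x p_1(x) = p_1(0) = \mu^{-1}$, so the integrand collapses to $\frac{p_1(x)}{M}\log\frac{M}{p_1(x)} = e^{-x/\mu}\,\frac{x}{\mu}$, whence $\entropy_{SH}^{(4)} = \int_0^\infty e^{-x/\mu}\frac{x}{\mu}\,dx = \mu\int_0^\infty y e^{-y}\,dy = \mu$.

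I do not expect a genuine obstacle here --- every integral is elementary. The only points that need care are: getting the case split for the absolute value and the positive part right, keyed to the sign of $\log\mu$ (equivalently, to whether $\mu < 1$ or $\mu > 1$); tracking the substitution so that the re-substitution $e^{-a} = \mu$ is applied consistently; and checking that the two branches of $\entropy_{SH}^{(1)}$ and of $\entropy_{SH}^{(2)}$ glue continuously at $\mu = 1$, since this is what makes the stated global monotonicity picture (decrease on $(0,\tfrac12)$, increase on $(\tfrac12,\infty)$ for $\entropy_{SH}^{(1)}$; increase on $(0,\infty)$ for $\entropy_{SH}^{(2)}$) actually hold.
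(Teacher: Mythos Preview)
Your proposal is correct and complete. The paper itself omits the proof entirely, stating only that ``the next statements are the results of the straightforward calculations, we omit the proofs''; your argument supplies exactly those straightforward calculations --- the substitution $y=x/\mu$, the case split on the sign of $\log\mu$ for items 1) and 2), the logarithmic splitting $\log(\mu e^y+1)=\log\mu+y+\log(1+\mu^{-1}e^{-y})$ for item 3), and the identification $M=p_1(0)=\mu^{-1}$ for item 4) --- and the elementary monotonicity analysis. There is nothing to compare: you have written out precisely the computation the authors left to the reader.
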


\begin{figure}[h]
     \centering
     \includegraphics[width=0.7\linewidth]{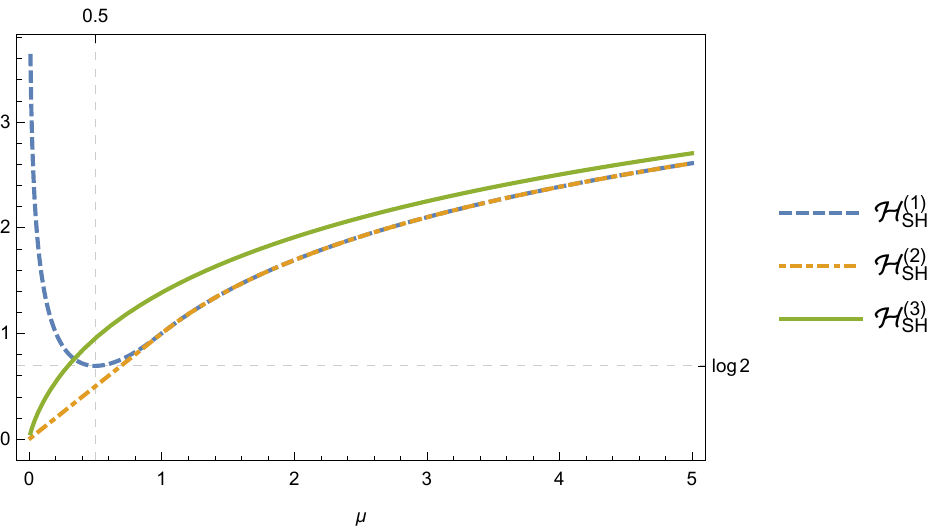}
     \caption{$\entropy_{SH}^{(i)}$, $i = 1,2,3$, for exponential distribution as functions of $\mu$}
     \label{fig:H123-exp}
 \end{figure}

\FloatBarrier

\section{R\'{e}nyi entropy: choice of the pre-limit functionals and  alternative forms}\label{sec6}
Consider in the same context as Shannon entropy , but more briefly, standard and alternative R\'{e}nyi entropies.

\begin{definition}
R\'enyi entropy with index $\alpha > 0$, $\alpha \ne 1$, for discrete distribution equals 
\[
\entropy_{R,\alpha} (\set{p_k, k \ge 1}) = \frac{1}{1-\alpha} \log \left(\sum_{k \ge 1} p_k^\alpha\right),
\]
and for continuous distribution it equals
\[
\entropy_{R,\alpha} (\set{p(x), x \in \real}) = \frac{1}{1-\alpha} \log \left(\int_\real p^\alpha(x) \,dx\right).
\]
\end{definition}

\begin{remark}
While for discrete distribution multiplier $\frac{1}{1 - \alpha}$ is natural, because $\log(\sum_{k \ge 1} p_k^\alpha)$ is positive for $\alpha < 1$ and negative for $\alpha > 1$, for continuous distribution it does not play a role of a factor that corrects the sign. For example, R\'enyi entropy of the normal distribution equals (see, e.g., \cite{entropy-gaussian})
\[
\entropy_{R,\alpha} = \log\sigma + \frac12 \log(2\pi) + \frac{\log\alpha}{2(\alpha - 1)},
\]
and can be both negative and positive, increasing from $-\infty$ to $+\infty$ with $\sigma \in (0,+\infty)$, for any $\alpha > 0$.
Nevertheless, traditionally factor $\frac{1}{1-\alpha}$ is preserved, because
\[
\entropy_{R,\alpha} (\set{p(x), x \in \real}) \to \entropy_{SH} (\set{p(x), x \in \real}),
\]
as $\alpha \to 1$, under some additional assumptions.
\end{remark}
\subsection{Incompatibility of R\'{e}nyi  entropy   with its discrete counterpart}
As the first result, we prove that, in general, R\'enyi entropies for discrete and continuous distributions are incomparable in the same sense as the respective Shannon entropies.

\begin{lemma}\label{lemmatransit2}
\begin{itemize}
    \item[$(i)$] Let $0<\alpha<1$ and $p(x)$, $x \in \real$  be a continuous   density of probability distribution. 
Then, in terms of Assumption (A), 
\begin{gather*}
\entropy_{R,\alpha}^N \coloneqq \frac{1}{1-\alpha} \log \left(\sum_{k=1}^{k_N} \left(\Delta F_k^N\right)^\alpha+\left(F\left(x_0^N\right)\right)^\alpha 
+ \Bigl(1 - F\left(x_{k_N}^N\right)\Bigr)^\alpha\right) \to +\infty,
\quad \text{as } N \to \infty.\end{gather*}
 \item[$(ii)$] Let $ \alpha>1$ and $p(x)$, $x \in \real$  be a bounded   density of probability distribution. 
Then, in terms of Assumption (A), 
\begin{gather*}  \entropy_{R,\alpha}^N \to -\infty,
\quad \text{as } N \to \infty.
\end{gather*}
\end{itemize}
Here we put $ \log 0=0$. 
\end{lemma}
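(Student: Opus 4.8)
The plan is to handle both parts through the single quantity
\[
S_N \coloneqq \sum_{k=1}^{k_N}\bigl(\Delta F_k^N\bigr)^\alpha + \bigl(F(x_0^N)\bigr)^\alpha + \bigl(1 - F(x_{k_N}^N)\bigr)^\alpha ,
\]
so that $\entropy_{R,\alpha}^N = \frac{1}{1-\alpha}\log S_N$, and to determine the asymptotics of $S_N$ as $N\to\infty$. The summands here are the masses of a genuine discrete distribution: they are nonnegative and sum to $1$. Consequently the whole behaviour of $S_N$ is dictated by the sign of $\alpha-1$, and the two parts are settled by the two opposite elementary inequalities for the map $t\mapsto t^{\alpha-1}$.

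For part $(i)$, $0<\alpha<1$, I would reuse the localization step of Lemma \ref{lemmatransit1}: by continuity of $p$ there exist an interval $[a,b]\subset\supp p$ and constants $0<m_1\le M_1$ with $m_1\le p\le M_1$ on $[a,b]$. For the intervals of $\pi_N$ meeting $[a,b]$ one has $\Delta F_k^N\ge m_1\Delta_k^N$, and since $\Delta_k^N\le|\pi_N|$ and $\alpha-1<0$ we get $\bigl(\Delta_k^N\bigr)^{\alpha-1}\ge|\pi_N|^{\alpha-1}$, whence
\[
S_N \ge \sum_k\bigl(\Delta F_k^N\bigr)^\alpha \ge m_1^\alpha|\pi_N|^{\alpha-1}\sum_k\Delta_k^N \ge \frac{m_1^\alpha(b-a)}{2}\,|\pi_N|^{\alpha-1},
\]
the inner sums running over those intervals, whose total length exceeds $(b-a)/2$ for large $N$. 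As $\alpha-1<0$ this lower bound tends to $+\infty$, so $\log S_N\to+\infty$, and since $\frac{1}{1-\alpha}>0$ we obtain $\entropy_{R,\alpha}^N\to+\infty$.

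For part $(ii)$, $\alpha>1$ with $p\le C$ bounded, the situation reverses. Now $\Delta F_k^N\le C\Delta_k^N\le C|\pi_N|$, and since $\alpha-1>0$ we have $\bigl(\Delta F_k^N\bigr)^{\alpha-1}\le (C|\pi_N|)^{\alpha-1}$, so that
\[
\sum_{k=1}^{k_N}\bigl(\Delta F_k^N\bigr)^\alpha \le (C|\pi_N|)^{\alpha-1}\sum_{k=1}^{k_N}\Delta F_k^N \le (C|\pi_N|)^{\alpha-1}\to 0 .
\]
The two boundary terms obey $\bigl(F(x_0^N)\bigr)^\alpha\to 0$ and $\bigl(1-F(x_{k_N}^N)\bigr)^\alpha\to 0$, because the tail masses vanish and $\alpha>0$. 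Hence $S_N\to 0$ and $\log S_N\to-\infty$, and it remains only to combine this with the prefactor $\frac{1}{1-\alpha}$ to read off the divergence of $\entropy_{R,\alpha}^N$.

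The routine ingredients are the two monotonicity inequalities for $t\mapsto t^{\alpha-1}$ and the disposal of the boundary terms together with the convention $\log 0=0$, which is needed only to absorb degenerate summands. The one delicate point, and the place where I would be most careful, is the sign in part $(ii)$: the divergence is carried by $\log S_N\to-\infty$, while the prefactor $\frac{1}{1-\alpha}$ is negative for $\alpha>1$ (it was positive in part $(i)$), so the two cases must be kept strictly apart. As an independent check on this sign I would use the structural fact that each $\entropy_{R,\alpha}^N$ is the Rényi entropy of an honest discrete distribution and is therefore nonnegative; this is exactly the constraint that the limiting sign must respect, and I would pin it down before committing to the direction of the divergence.
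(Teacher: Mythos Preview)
Your argument mirrors the paper's proof closely. For $(i)$ you localize to an interval $[a,b]$ where the density is bounded between $m_1$ and $M_1$ and then show the partial sum of $\alpha$th powers blows up; the paper does the same, the only cosmetic difference being that it bounds $(\Delta F_k^N)^{1-\alpha}$ from above via $M_1|\pi_N|$ and sums the $\Delta F_k^N$, whereas you bound $(\Delta F_k^N)^\alpha$ from below via $m_1\Delta_k^N$ and sum the $\Delta_k^N$. For $(ii)$ your estimate $\sum_k(\Delta F_k^N)^\alpha\le(C|\pi_N|)^{\alpha-1}\to0$ is exactly the paper's.

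Your caution about the sign in part $(ii)$ is well placed and in fact detects an error in the lemma as stated. With $S_N\to0$ one has $\log S_N\to-\infty$, and since $\frac{1}{1-\alpha}<0$ for $\alpha>1$, the product $\entropy_{R,\alpha}^N=\frac{1}{1-\alpha}\log S_N$ tends to $+\infty$, not to $-\infty$. Your structural check---that each $\entropy_{R,\alpha}^N$ is the R\'enyi entropy of a genuine discrete distribution and hence nonnegative---independently confirms this and rules out divergence to $-\infty$. The paper's own proof records only that $S_N\to0$ and then writes ``whence the proof follows'' without tracking the sign of the prefactor; you were right to pause there.
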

 \begin{remark} Of course, we will obtain the same result for the simplified sum 
   $$\widetilde\entropy_{R,\alpha}^N=  \frac{1}{1-\alpha} \log \left(\sum_{k=1}^{k_N} \left(\Delta F_k^N\right)^\alpha  \right), $$
   therefore, for the technical simplicity, we shall consider $\widetilde\entropy_{R,\alpha}^N$ in what follows.
\end{remark}
\begin{proof}
$(i)$ We follow the proof of Lemma \ref{lemmatransit1} and find the interval $[a,b] \subset \supp\set{p(x), x \in \real}$ such that for $k_1^N < k \le k_2^N$ it holds that
$0 < m_1 \Delta_k^N \le \Delta F_k^N \le M_1 \Delta_k^N \le M_1 \abs{\pi_N}$.
Then for $0 < \alpha < 1$
\[
\sum_{k=1}^{k_N} \left(\Delta F_k^N\right)^\alpha
\ge \sum_{k: t_k \in [a,b]} \frac{\Delta F_k^N}{\left(\Delta F_k^N\right)^{1-\alpha}}
\ge \frac{1}{\left(M_1 \abs{\pi_N}\right)^{1-\alpha}}
\Bigl(F\left(x_{k_2}^N\right) - F\left(x_{k_1}^N\right)\Bigr).
\]
As $N \to \infty$,
\[
\frac{1}{\left(M_1 \abs{\pi_N}\right)^{1-\alpha}} \to +\infty,
\quad\text{and}\quad
F\left(x_{k_2}^N\right) - F\left(x_{k_1}^N\right)
\to F(b) - F(a) > 0,
\]
whence $\widetilde\entropy_{R,\alpha}^N \to +\infty$.

$(ii)$ Now, let $\alpha > 1$.
Then
\[
\sum_{k=1}^{k_N} \left(\Delta F_k^N\right)^\alpha
\le (M\abs{\pi_N})^{\alpha-1} \to 0
\quad \text{as } N \to \infty,
\]
whence the proof follows.
\end{proof}

\subsection{Discrete functionals compatible with  R\'{e}nyi entropy of continuous distribution}

In the framework of Assumption (A) consider the discrete functional
\[
\widetilde\entropy_{R,\alpha}^N = \frac{1}{1-\alpha} \log\left(\sum_{k=1}^{k_N} \left(\Delta F_k^N\right)^\alpha \left(\Delta_k^N\right)^{1-\alpha} \right).
\]
From now on, we assume that
$\int_\real p^\alpha (x)\,dx < \infty$
for $\alpha \in (0, +\infty)$ in consideration. Obviously, this integral is strictly positive.

\begin{theorem}\label{theor3}
\begin{itemize}
\item[$(i)$]
Let $\alpha > 1$, $p \in C(\real)$.
Then
\begin{equation}\label{eq:renyi-conv}
\widetilde\entropy_{R,\alpha}^N \to \entropy_{R,\alpha} (\set{p(x),x \in \real}),
\quad\text{as } N \to \infty.
\end{equation}

\item[$(ii)$]
Let $0 < \alpha < 1$. Consider only uniform partitions $\pi_N=\{\frac kN, k = -N^2, \dots,  N^2\}$ and assume that $p \in C(\real)$ and there exists such $z>0$ that $p$ increases on $(-\infty, -z]$ and decreases on $[z, \infty)$.
Then \eqref{eq:renyi-conv} holds.
\end{itemize}
\end{theorem}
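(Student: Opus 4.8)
The plan is to mirror the proof of Theorem~\ref{theor1}, showing that the Riemann-type sum $\sum_{k}\left(\Delta F_k^N\right)^\alpha\left(\Delta_k^N\right)^{1-\alpha}$ converges to $\int_\real p^\alpha(x)\,dx$; since $x\mapsto\frac{1}{1-\alpha}\log x$ is continuous on $(0,\infty)$ and the integral is strictly positive, the convergence of the entropies will follow. Using the mean value theorem, $\Delta F_k^N = p(\theta_k^N)\,\Delta_k^N$ for some $\theta_k^N\in[x_{k-1}^N,x_k^N]$, so each summand equals $p(\theta_k^N)^\alpha\,\Delta_k^N$, and the sum is exactly the Riemann sum $\sum_k p(\theta_k^N)^\alpha\,\Delta_k^N$ for the integrand $p^\alpha$. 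The task is therefore to justify passing to the limit for the (possibly improper) integral over all of $\real$, splitting into a bounded central part $[-z,z]$ (where continuity of $p$ and the Lebesgue dominated convergence theorem, exactly as in \eqref{convint}, give convergence) and two tail parts.

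For part $(i)$, $\alpha>1$, the tails are the easy case. First I would fix $\varepsilon>0$ and pick $x_1>0$ with $\int_{|x|\ge x_1}p^\alpha(x)\,dx<\varepsilon$; then one must bound the tail Riemann sum $\sum_{k:[x_{k-1}^N,x_k^N]\cap[-z,z]=\emptyset} p(\theta_k^N)^\alpha\Delta_k^N$. The point is that $p(\theta_k^N)^\alpha\Delta_k^N = \left(\Delta F_k^N\right)^\alpha\left(\Delta_k^N\right)^{1-\alpha} = \Delta F_k^N\cdot\left(p(\theta_k^N)\right)^{\alpha-1}$. Since for a bounded density $p\le M$ we get $p(\theta_k^N)^{\alpha-1}\le M^{\alpha-1}$ (continuity plus $\int p=1$ makes $p$ bounded, or one invokes boundedness directly as in Lemma~\ref{lemmatransit2}), the tail sum is $\le M^{\alpha-1}\int_{|x|>z'}p(x)\,dx$ for suitable $z'$, which is small. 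Combined with the central convergence this yields \eqref{eq:renyi-conv}.

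For part $(ii)$, $0<\alpha<1$, the multiplier $p(\theta_k^N)^{\alpha-1}$ now has a \emph{negative} exponent, so it blows up where $p$ is small; this is the main obstacle, and it is exactly why the extra monotonicity hypothesis and the specific uniform partition $\pi_N=\{k/N:\,|k|\le N^2\}$ are imposed. The remedy is the classical comparison for monotone integrands: on the right tail $[z,\infty)$, where $p$ is decreasing, one has $p(\theta_k^N)\le p(x_{k-1}^N)$, hence on each interval $p(\theta_k^N)^\alpha\le p(x_{k-1}^N)^\alpha \le \frac{1}{\Delta_k^N}\int_{x_{k-2}^N}^{x_{k-1}^N}p^\alpha(x)\,dx$ (shifting one step to the left and using monotonicity of $p^\alpha$), so the tail Riemann sum is dominated by $\int_{z-1/N}^{\infty}p^\alpha(x)\,dx$ up to a single boundary interval whose contribution is $O(|\pi_N|^{1-\alpha})\to0$; the analogous bound holds on the left tail where $p$ is increasing. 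A matching lower bound follows the same way with the inequality reversed. Since $\int_\real p^\alpha<\infty$ by assumption, the tails can be made uniformly small, and together with the central convergence we again obtain $\sum_k\left(\Delta F_k^N\right)^\alpha\left(\Delta_k^N\right)^{1-\alpha}\to\int_\real p^\alpha(x)\,dx$, whence \eqref{eq:renyi-conv}. I expect the bookkeeping of the endpoint intervals (those straddling $\pm z$ or the cutoff $\pm N$) to be the only delicate point, handled exactly as the $x_{(z)}^N$, $x_{(N)}^N$ terms in the proof of Theorem~\ref{theor1}.
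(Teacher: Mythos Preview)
Your overall plan---reduce to showing $\sum_k (\Delta F_k^N)^\alpha(\Delta_k^N)^{1-\alpha}\to\int_\real p^\alpha$, then split into a central part handled by continuity and dominated convergence and two tail parts---matches the paper's proof exactly, and your treatment of part~$(ii)$ (monotonicity of $p$ on the tails plus the one-step shift $p^\alpha(t_{k-1}^N)\,\Delta_k^N\le\int_{t_{k-2}^N}^{t_{k-1}^N}p^\alpha(x)\,dx$) is essentially identical to the paper's argument.

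The gap is in part~$(i)$. Your tail bound relies on a global bound $p\le M$, justified by the claim that ``continuity plus $\int p=1$ makes $p$ bounded''. This is false: a continuous density can have arbitrarily tall thin bumps---for instance, triangular spikes of height $n$ and base width $n^{-(\alpha+2)}$ centred at the integers $n\ge2$ give (after normalisation) a continuous $p$ with $\int p<\infty$ and $\int p^\alpha<\infty$ for any fixed $\alpha>1$, yet $\sup p=+\infty$. Nor is boundedness among the hypotheses of Theorem~\ref{theor3}$(i)$; it is assumed only in Lemma~\ref{lemmatransit2}$(ii)$, so ``invoking it directly'' would strengthen the hypotheses. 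The paper sidesteps this by applying H\"older's inequality on each subinterval: for $\alpha>1$,
\[
\bigl(\Delta F_k^N\bigr)^\alpha\bigl(\Delta_k^N\bigr)^{1-\alpha}
\;\le\;\int_{t_{k-1}^N}^{t_k^N}p^\alpha(x)\,dx,
\]
so the tail sum $S_2^N$ is dominated by $\int_{\real\setminus[a,b]}p^\alpha(x)\,dx$, which is small by the choice of $[a,b]$. With this single replacement your argument for $(i)$ goes through unchanged.
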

\begin{remark}\label{imprem} As an intermediate result, we get the convergence of $\sum_{k=1}^{k_N} \left(\Delta F_k^N\right)^\alpha \left(\Delta_k^N\right)^{1-\alpha}$ to $\int_\real p^\alpha (x)\,dx>0$.
    
\end{remark}
\begin{proof}
$(i)$
Let $\alpha > 1$.
Then
\begin{equation}\label{eq:inequ1}
0 \le \Delta F_k^N \le \left(\int_{t_{k-1}^N}^{t_k^N} p^\alpha(x)\,dx\right)^{\frac1\alpha} \left(\Delta_k^N\right)^{1-\frac1\alpha}.
\end{equation}
Now, choose $\varepsilon > 0$.
There exists
$[a, b] \subset s(p) = \supp\set{p(x), x\in\real}$
such that
\[
\int_{[a,b]} p^\alpha(x)\,dx \in \left[(1+\varepsilon)^{-1} \int_\real p^\alpha(x)\,dx, \int_\real p^\alpha(x)\,dx\right].
\]
Then, using Lagrange theorem, we get that
\begin{align}
0 \le \delta_1^\varepsilon &= \log\left(\int_\real p^\alpha(x)\,dx\right) - \log\left(\int_{[a,b]} p^\alpha(x)\,dx\right)
\notag\\
&\le \frac{1}{\int_{[a,b]} p^\alpha(x)\,dx} \int_\real p^\alpha(x)\,dx \left(1 - \frac{1}{1 + \varepsilon}\right)
= \varepsilon.
\label{eq:inequ2}
\end{align}
Now taking into account continuity of $p$, we immediately get, similarly to \eqref{convint} that
\begin{align*}
\log S_1^N &\coloneqq \log \sum_{k: [t_{k-1}^N, t_k^N] \cap [a, b] \ne \emptyset}  \left(\Delta F_k^N\right)^\alpha \left(\Delta_k^N\right)^{1-\alpha} 
=  \log \sum_{k: [t_{k-1}^N, t_k^N] \cap [a, b] \ne \emptyset}  \left(p\left(\theta_k^N\right)\right)^\alpha \Delta_k^N
\\
&\to \log \int_a^b p^\alpha(x)\,dx,
\quad\text{as } N \to \infty,
\end{align*}
and therefore we can choose $N_0 \ge 1$ such that for all $N \ge N_0$
\begin{equation}\label{eq:inequ3}
\delta_2^N = \abs{\log S_1^N - \log \int_a^b p^\alpha(x)\,dx} < \varepsilon.
\end{equation}
Now, let us bound from above
\[
\delta_3^N = \abs{\log\sum_{k=1}^{k_N}  \left(\Delta F_k^N\right)^\alpha \left(\Delta_k^N\right)^{1-\alpha} - \log S_1^N}
= \abs{\log\left(1 + \frac{S_2^N}{S_1^N}\right)}
\le \frac{S_2^N}{S_1^N},
\]
where
\[
S_2^N = \sum_{k:  [t_{k-1}^N, t_k^N] \cap [a, b] = \emptyset}  \left(\Delta F_k^N\right)^\alpha \left(\Delta_k^N\right)^{1-\alpha}.
\]
Note that
\[
S_1^N \to \int_a^b p^\alpha(x)\,dx > 0.
\]
Now, let us bound $S_2^N$ with the help of \eqref{eq:inequ1}:
\[
\left(\Delta F_k^N\right)^\alpha \left(\Delta_k^N\right)^{1-\alpha}
\le \int_{t_{k-1}^N}^{t_k^N} p^\alpha(x)\,dx,
\]
whence
\[
S_2^N \le \int_{\real\setminus[a,b]} p^\alpha(x)\,dx
\le \frac{\varepsilon  \int_\real p^\alpha(x)\,dx}{1 + \varepsilon}.
\]
Finally,
\begin{equation}\label{eq:inequ4}
\frac{S_2^N}{S_1^N} \le \frac{\varepsilon \int_\real p^\alpha(x)\,dx}{(1 + \varepsilon) S_1^N}
\le \frac{\varepsilon}{1 + \varepsilon}\, \frac{\int_\real p^\alpha(x)\,dx}{\int_a^b p^\alpha(x)\,dx + \varepsilon}.
\end{equation}
Now, taking into account that
\[
\abs{\log\left(\int_\real p^\alpha(x)\,dx\right) - \log\left(\sum_{k=1}^{k_N} \left(\Delta F_k^N\right)^\alpha \left(\Delta_k^N\right)^{1-\alpha}\right)}
\le \delta_1^\varepsilon + \delta_2^N + \delta_3^N,
\]
inequalities \eqref{eq:inequ2}, \eqref{eq:inequ3} and  \eqref{eq:inequ4}, and arbitrary choice of $\varepsilon > 0$, we get the proof of $(i)$.

$(ii)$
Note that we applied the fact that $\alpha > 1$, only bounding $\delta_3^N$.
So, let now $0 < \alpha < 1$, and let us construct an upper bound for $\delta_3^N$. 
In fact, it means that we need to construct the upper bound for $S_2^N$.
Without loss of generality we can assume that $[-z, z] \subset [a, b]$.
Then, in particular, $p$ increases on any interval $[t_{k-1}^N, t_k^N]$ such that $[t_{k-1}^N, t_k^N] \cap [a, b] = \emptyset$, and $t_{k-1}^N > b$.
Consider only this case since the case where $t_k^N < a$ is considered similarly.
Denote $t_{k-1,1}^N$ the left endpoint of the first interval  $[t_{k-1}^N, t_k^N]$ that does not intersect with $[a, b]$ and $t_{k-1}^N > b$.
Then
\begin{align*}
S_{2,1}^N &\coloneqq \sum_{k:\, t_{k-1}^N \ge t_{k-1,1}^N} \left(\Delta F_k^N\right)^\alpha \left(\Delta_k^N\right)^{1-\alpha}
\le \sum_{k:\, t_{k-1}^N \ge t_{k-1,1}^N} p^\alpha \left(t_{k-1}^N\right) \Delta_k^N
\\
&= \sum_{k:\, t_{k-1}^N \ge t_{k-1,1}^N} p^\alpha\left(t_{k-1}^N\right) \frac1N
\le p^\alpha\left(t_{k-1,1}^N\right) \frac1N
+ \sum_{k:\, t_{k-1}^N > t_{k-1,1}^N} p^\alpha\left(t_{k-1}^N\right)\frac1N.
\end{align*}
But for any $t_{k-1}^N > t_{k-1,1}^N$
\[
p^\alpha\left(t_{k-1}^N\right)\frac1N
\le \int_{\frac{k-2}{N}}^{\frac{k-1}{N}} p^\alpha(x) \,dx,
\]
therefore
\[
S_{2,1}^N \le p^\alpha \left(t_{k-1,1}^N\right) \frac1N
+ \int_{t_{k-1,1}^N}^\infty p^\alpha(x) \,dx
\to 0 \quad \text{as } N \to \infty,
\]
and the proof of $(ii)$ follows.
\end{proof}

\subsection{Alternative versions of R\'{e}nyi entropy and their properties}
Now, let us construct the alternatives to R\'{e}nyi entropy, similar to \eqref{alterShann}--\eqref{alterShann3}:
\begin{gather*}
\entropy_{R,\alpha}^{(1)}(\{p(x), x \in \real\}) = \frac{1}{|1-\alpha|} \left|\log \left(\int_\real p^\alpha(x) \,dx\right)\right|;
\\
\entropy_{R,\alpha}^{(2)}(\{p(x), x \in \real\}) = \frac{1}{|1-\alpha|} \left(\log \left(\int_\real p^\alpha(x) \,dx\right)\right)_+;
\\
\entropy_{R,\alpha}^{(3)}(\{p(x), x \in \real\}) = \frac{1}{|1-\alpha|}   \log \left(\int_\real p^\alpha(x) \,dx +1\right) .
\end{gather*}
Of course, all of them are strictly positive. Moreover, under assumptions of Theorem \ref{theor3}, according to Remark \ref{imprem}, $\sum_{k=1}^{k_N} \left(\Delta F_k^N\right)^\alpha \left(\Delta_k^N\right)^{1-\alpha}\to \int_\real p^\alpha (x)\,dx>0$, and we  get that all alternative entropies are the limits of respective discrete functionals. 

\subsection{Gaussian distribution}
Let us now consider the case where $p_0(x)$ is the density of a centered normal distribution, as defined in equation~\eqref{eq:norm-pdf}.

\begin{proposition}
Let $\alpha > 0$, $\alpha \ne 1$, and define $\sigma_\alpha \coloneqq (2\pi)^{-1/2} \alpha^{1/[2(1-\alpha)]}$. Then
\begin{enumerate}[1)]
\item For any $\alpha \in (0,1) \cup (1,+\infty)$
\[
\entropy_{R,\alpha}^{(1)}(\{p_0(x), x \in \real\}) =
\begin{cases}
-\log \sigma - \dfrac{1}{2} \log(2\pi) + \dfrac{\log \alpha}{2(1-\alpha)}, & \text{if } \sigma \le \sigma_\alpha, \\
\log \sigma + \dfrac{1}{2} \log(2\pi) - \dfrac{\log \alpha}{2(1-\alpha)}, & \text{if } \sigma > \sigma_\alpha.
\end{cases}
\]
The function $\entropy_{R,\alpha}^{(1)}(\{p_0(x), x \in \real\})$ decreases from $+\infty$ to $0$ as $\sigma$ increases from $0$ to $\sigma_\alpha$, and increases from $0$ to $+\infty$ as $\sigma$ increases from $\sigma_\alpha$ to $+\infty$.

\item If $\alpha \in (0, 1)$, then
\[
\entropy_{R,\alpha}^{(2)}(\{p_0(x), x \in \real\}) =
\begin{cases}
0, & \text{if } \sigma \le \sigma_\alpha, \\
\log \sigma + \dfrac{1}{2} \log(2\pi) - \dfrac{\log \alpha}{2(1-\alpha)}, & \text{if } \sigma > \sigma_\alpha.
\end{cases}
\]
In this case, $\entropy_{R,\alpha}^{(2)}$ increases from $0$ to $+\infty$ as $\sigma$ increases from $\sigma_\alpha$ to $+\infty$.

If $\alpha > 1$, then
\[
\entropy_{R,\alpha}^{(2)}(\{p_0(x), x \in \real\}) =
\begin{cases}
-\log \sigma - \dfrac{1}{2} \log(2\pi) + \dfrac{\log \alpha}{2(1-\alpha)}, & \text{if } \sigma \le \sigma_\alpha, \\
0, & \text{if } \sigma > \sigma_\alpha.
\end{cases}
\]
In this case, $\entropy_{R,\alpha}^{(2)}$ decreases from $+\infty$ to $0$ as $\sigma$ increases from $0$ to $\sigma_\alpha$.

\item 
If $\alpha \in (0, 1)$, then $\entropy_{R,\alpha}^{(3)}(\{p_0(x), x \in \real\})$ increases from $0$ to $+\infty$ as $\sigma$ increases from $0$ to $+\infty$.

If $\alpha > 1$, then $\entropy_{R,\alpha}^{(3)}(\{p_0(x), x \in \real\})$ decreases from $+\infty$ to $0$ as $\sigma$ increases from $0$ to $+\infty$.
\end{enumerate}
\end{proposition}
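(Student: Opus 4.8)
The plan is to reduce everything to one explicit Gaussian integral. For every $\alpha>0$ one has
\[
\int_\real p_0^\alpha(x)\,dx = \frac{1}{(\sigma\sqrt{2\pi})^\alpha}\int_\real e^{-\frac{\alpha x^2}{2\sigma^2}}\,dx = \frac{1}{(\sigma\sqrt{2\pi})^\alpha}\sqrt{\frac{2\pi\sigma^2}{\alpha}} = \frac{(\sigma\sqrt{2\pi})^{1-\alpha}}{\sqrt\alpha},
\]
so that $\log\left(\int_\real p_0^\alpha(x)\,dx\right) = (1-\alpha)\log(\sigma\sqrt{2\pi}) - \tfrac12\log\alpha$. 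Since by definition $\log\sigma_\alpha = -\tfrac12\log(2\pi) + \frac{\log\alpha}{2(1-\alpha)}$, this rewrites as the single identity
\[
\log\left(\int_\real p_0^\alpha(x)\,dx\right) = (1-\alpha)\log\frac{\sigma}{\sigma_\alpha},
\]
from which all three parts follow by elementary manipulation.

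For part 1), $\entropy_{R,\alpha}^{(1)}(\{p_0(x),x\in\real\}) = \frac{1}{\abs{1-\alpha}}\abs{\log\int_\real p_0^\alpha} = \abs{\log(\sigma/\sigma_\alpha)}$, and splitting at $\sigma=\sigma_\alpha$ and expressing $\pm\log(\sigma/\sigma_\alpha)$ in terms of $\log\sigma$, $\log(2\pi)$ and $\log\alpha$ produces exactly the stated piecewise formula. The claimed behaviour — strict decrease from $+\infty$ to $0$ on $(0,\sigma_\alpha)$ and strict increase back to $+\infty$ — is then immediate from monotonicity of $\log$.

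For part 2), I would track the sign of $\log\int_\real p_0^\alpha = (1-\alpha)\log(\sigma/\sigma_\alpha)$. If $\alpha\in(0,1)$ the factor $1-\alpha$ is positive, so this quantity is positive precisely for $\sigma>\sigma_\alpha$; taking its positive part and dividing by $\abs{1-\alpha}=1-\alpha$ gives $\log(\sigma/\sigma_\alpha)$ for $\sigma>\sigma_\alpha$ and $0$ otherwise, i.e.\ the first displayed formula, which increases from $0$ to $+\infty$. If $\alpha>1$ the factor $1-\alpha$ is negative, so the quantity is positive precisely for $\sigma<\sigma_\alpha$, and the same bookkeeping yields $\log(\sigma_\alpha/\sigma)$ for $\sigma\le\sigma_\alpha$ and $0$ otherwise, decreasing from $+\infty$ to $0$. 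Rewriting these back in the $\log\sigma,\log(2\pi),\log\alpha$ variables gives the stated expressions.

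For part 3), I would instead use the closed form $\int_\real p_0^\alpha = (\sigma\sqrt{2\pi})^{1-\alpha}\alpha^{-1/2}$ directly: as a function of $\sigma$ it is strictly increasing from $0$ to $+\infty$ when $\alpha\in(0,1)$ and strictly decreasing from $+\infty$ to $0$ when $\alpha>1$, since the exponent $1-\alpha$ changes sign. Composing with the strictly increasing bijection $t\mapsto \frac{1}{\abs{1-\alpha}}\log(t+1)$ of $(0,\infty)$ onto $(0,\infty)$ gives the asserted monotonic behaviour of $\entropy_{R,\alpha}^{(3)}$ in each case. There is no real analytic obstacle anywhere in this proposition; the only point requiring care is the sign of the factor $1-\alpha$ in the two regimes $\alpha<1$ and $\alpha>1$, which is exactly what makes the two branches of the piecewise formulas in parts 1) and 2) interchange roles.
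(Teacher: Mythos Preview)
Your proof is correct and follows essentially the same route as the paper: both start from the explicit Gaussian integral $\int_\real p_0^\alpha(x)\,dx = (\sigma\sqrt{2\pi})^{1-\alpha}\alpha^{-1/2}$, observe how it behaves in $\sigma$ for $\alpha<1$ versus $\alpha>1$, and then read off the three entropies from their definitions. Your compact rewriting $\log\int_\real p_0^\alpha = (1-\alpha)\log(\sigma/\sigma_\alpha)$ is a nice organizing device that makes the case splits in parts 1) and 2) especially transparent, but it is not a different method.
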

\begin{proof}
As shown in \cite[formula (A1)]{entropy-gaussian}, we have
\[
\int_\real p_0^\alpha(x)\,dx = \sigma^{1-\alpha}(2\pi)^{(1-\alpha)/2} \alpha^{-1/2}.
\]
It follows that:
\begin{itemize}
\item If $\alpha < 1$, the integral increases from $0$ to $+\infty$ as $\sigma \to +\infty$;
\item If $\alpha > 1$, it decreases from $+\infty$ to $0$;
\item In both cases, the integral equals $1$ when $\sigma = \sigma_\alpha$.
\end{itemize}

The stated behavior of the entropies $\entropy_{R,\alpha}^{(i)}(\{p_0(x), x \in \real\})$, $i = 1, 2, 3$, then follows directly from their respective definitions.
\end{proof}

\begin{figure}
    \centering
\subfigure[$\alpha = 0.5$]{\includegraphics[scale=0.61]{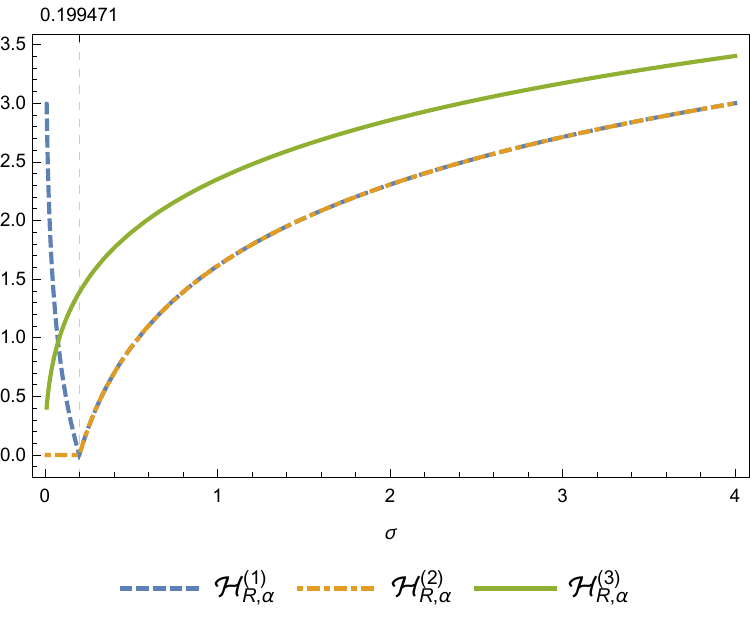}}\hfill
\subfigure[$\alpha = 1.5$]{\includegraphics[scale=0.61]{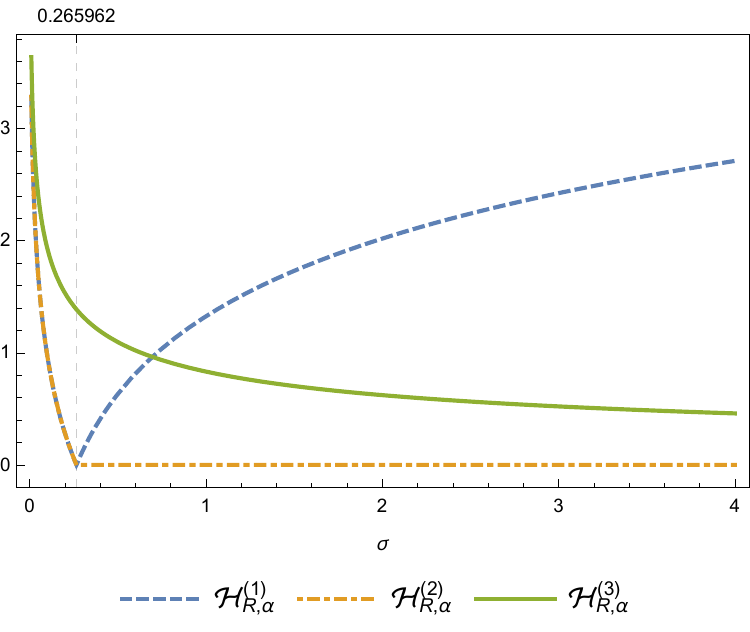}}
    \caption{$\entropy_{R,\alpha}^{(i)}$, $i = 1,2,3$, for Gaussian distribution as functions of $\sigma$ for $\alpha = 0.5$ and $\alpha = 1.5$}
\end{figure}

\subsection{Exponential distribution}
Consider exponential distribution with the density $p_1(x) = \mu^{-1} e^{- x/\mu}$, $x\ge 0$, $\mu > 0$ in the same spirit.
In this case, 
\[
\int_\real p_1^\alpha(x)\,dx = \frac{\mu^{1-\alpha}}{\alpha}
\]
(see \cite[Proposition 3.5]{BMR}), whence we get the following result.

\pagebreak
\begin{proposition}
Let $\alpha > 0$, $\alpha \ne 1$, and define $\mu_\alpha \coloneqq  \alpha^{1/(1-\alpha)}$. Then
\begin{enumerate}[1)]
\item For any $\alpha \in (0,1) \cup (1,+\infty)$
\[
\entropy_{R,\alpha}^{(1)}(\{p_1(x), x \in \real\}) =
\begin{cases}
-\log \mu + \dfrac{\log \alpha}{1-\alpha}, & \text{if } \mu \le \mu_\alpha, \\
\log \mu - \dfrac{\log \alpha}{1-\alpha}, & \text{if } \mu > \mu_\alpha.
\end{cases}
\]
The function $\entropy_{R,\alpha}^{(1)}(\{p_1(x), x \in \real\})$ decreases from $+\infty$ to $0$ as $\mu$ increases from $0$ to $\mu_\alpha$, and increases from $0$ to $+\infty$ as $\mu$ increases from $\mu_\alpha$ to $+\infty$.

\item If $\alpha \in (0, 1)$, then
\[
\entropy_{R,\alpha}^{(2)}(\{p_1(x), x \in \real\}) =
\begin{cases}
0, & \text{if } \mu \le \mu_\alpha, \\
\log \mu - \dfrac{\log \alpha}{1-\alpha}, & \text{if } \mu > \mu_\alpha.
\end{cases}
\]
In this case, $\entropy_{R,\alpha}^{(2)}$ increases from $0$ to $+\infty$ as $\mu$ increases from $\mu_\alpha$ to $+\infty$.

If $\alpha > 1$, then
\[
\entropy_{R,\alpha}^{(2)}(\{p_1(x), x \in \real\}) =
\begin{cases}
-\log \mu + \dfrac{\log \alpha}{1-\alpha}, & \text{if } \mu \le \mu_\alpha, \\
0, & \text{if } \mu > \mu_\alpha.
\end{cases}
\]
In this case, $\entropy_{R,\alpha}^{(2)}$ decreases from $+\infty$ to $0$ as $\mu$ increases from $0$ to $\mu_\alpha$.

\item For any $\alpha \in (0,1) \cup (1,+\infty)$
\[
\entropy_{R,\alpha}^{(3)}(\{p_1(x), x \in \real\}) =
\frac{1}{\abs{1 - \alpha}} \log \left(\mu^{1-\alpha}\alpha^{-1} + 1\right)
\]

If $\alpha \in (0, 1)$, then $\entropy_{R,\alpha}^{(3)}(\{p_1(x), x \in \real\})$ increases from $0$ to $+\infty$ as $\mu$ increases from $0$ to $+\infty$.

If $\alpha > 1$, then $\entropy_{R,\alpha}^{(3)}(\{p_1(x), x \in \real\})$ decreases from $+\infty$ to $0$ as $\mu$ increases from $0$ to~$+\infty$.
\end{enumerate}
\end{proposition}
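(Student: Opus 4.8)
The plan is to reduce everything to the explicit value $\int_\real p_1^\alpha(x)\,dx = \mu^{1-\alpha}/\alpha$ cited from \cite[Proposition 3.5]{BMR}, and then simply unwind the definitions of the three alternative R\'enyi entropies. First I would record that
\[
\log\left(\int_\real p_1^\alpha(x)\,dx\right) = (1-\alpha)\log\mu - \log\alpha,
\]
so this quantity vanishes precisely when $\mu^{1-\alpha} = \alpha$, that is, when $\mu = \mu_\alpha$. Next I would pin down the position of the integral relative to $1$: since $\mu \mapsto \mu^{1-\alpha}$ is increasing for $\alpha < 1$ and decreasing for $\alpha > 1$, the integral is smaller than $1$ for $\mu < \mu_\alpha$ and larger than $1$ for $\mu > \mu_\alpha$ when $\alpha < 1$, and the opposite holds when $\alpha > 1$; in every case the integral ranges over all of $(0,+\infty)$ as $\mu$ runs over $(0,+\infty)$.

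For part 1), substituting into $\entropy_{R,\alpha}^{(1)} = \frac{1}{|1-\alpha|}\bigl|\log\int_\real p_1^\alpha(x)\,dx\bigr|$, I would split according to whether $\mu < \mu_\alpha$ or $\mu > \mu_\alpha$, resolve the absolute value using the sign information just obtained, and simplify $\frac{|(1-\alpha)\log\mu - \log\alpha|}{|1-\alpha|}$; a short separate check for $\alpha < 1$ and for $\alpha > 1$ shows that both branches collapse to $-\log\mu + \frac{\log\alpha}{1-\alpha}$ on $(0,\mu_\alpha]$ and to $\log\mu - \frac{\log\alpha}{1-\alpha}$ on $(\mu_\alpha,+\infty)$, as claimed. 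Monotonicity and the limiting values then follow at once: each branch is affine in $\log\mu$ with slope $-1$ and $+1$ respectively, the two branches agree at $\mu_\alpha$ with common value $0$, and each tends to $+\infty$ at the appropriate endpoint.

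For part 2), the computation is identical except that the absolute value is replaced by the positive part, which equals $0$ exactly on the half-line where $\log\int_\real p_1^\alpha(x)\,dx \le 0$; by the sign analysis this is $\{\mu \le \mu_\alpha\}$ when $\alpha < 1$ and $\{\mu \ge \mu_\alpha\}$ when $\alpha > 1$, and on the complementary half-line the expression coincides with the corresponding branch of $\entropy_{R,\alpha}^{(1)}$, giving the stated piecewise formulas and monotonicity. For part 3), I would only observe that $\mu \mapsto \mu^{1-\alpha}/\alpha + 1$ inherits the monotonicity of $\mu^{1-\alpha}$ and takes values in $(1,+\infty)$, hence $\log\left(\mu^{1-\alpha}/\alpha + 1\right)$ takes values in $(0,+\infty)$, and dividing by the positive constant $|1-\alpha|$ preserves monotonicity and the endpoint behaviour.

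Since every step is an elementary manipulation of logarithms and powers, there is no real obstacle here; the only point demanding a little care is keeping track of the sign of $1-\alpha$ hidden in the denominator $|1-\alpha|$, so that the absolute value and the positive part unfold to precisely the piecewise expressions asserted in the three formulas.
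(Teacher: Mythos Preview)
Your proposal is correct and follows exactly the paper's approach: the paper simply records the formula $\int_\real p_1^\alpha(x)\,dx = \mu^{1-\alpha}/\alpha$ from \cite[Proposition 3.5]{BMR} and states that the proposition follows, so your unwinding of the definitions via the sign of $(1-\alpha)\log\mu - \log\alpha$ is precisely the omitted routine computation.
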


\begin{figure}
    \centering
\subfigure[$\alpha = 0.5$]{\includegraphics[scale=0.61]{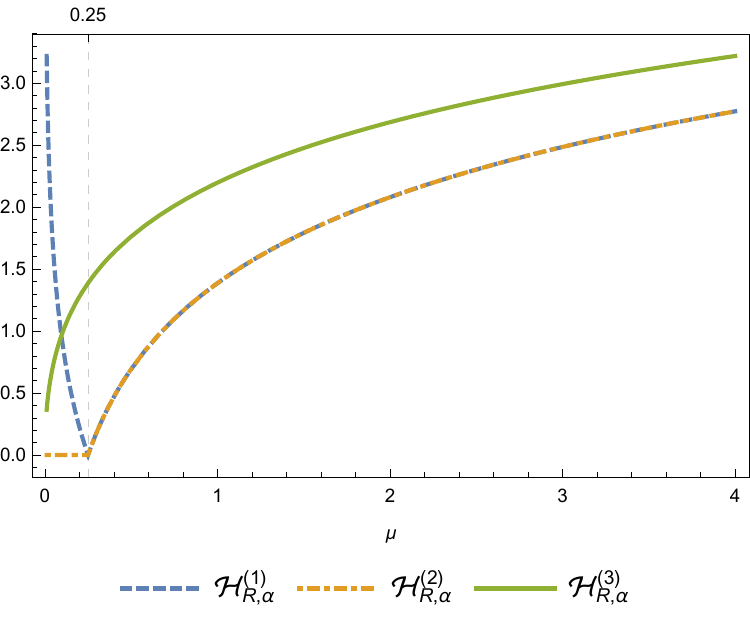}}\hfill
\subfigure[$\alpha = 1.5$]{\includegraphics[scale=0.61]{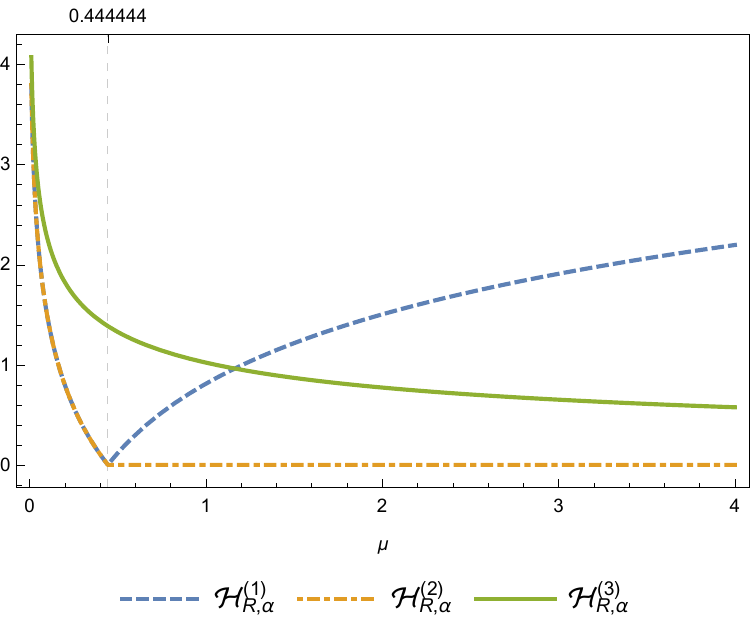}}
    \caption{$\entropy_{R,\alpha}^{(i)}$, $i = 1,2,3$, for exponential distribution as functions of $\mu$ for $\alpha = 0.5$ and $\alpha = 1.5$}
\end{figure}
\FloatBarrier


\begin{thebibliography}{10}

\bibitem{Aczel74}
J.~Acz\'el, B.~Forte, and C.~T. Ng.
\newblock Why the {S}hannon and {H}artley entropies are `natural'.
\newblock {\em Advances in Appl. Probability}, 6:131--146, 1974.

\bibitem{ali2023shannon}
A.~Ali, S.~Anam, and M.~M. Ahmed.
\newblock Shannon entropy in artificial intelligence and its applications based
  on information theory.
\newblock {\em J. Appl. Emerg. Sci}, 13(1):9--17, 2023.

\bibitem{BMR}
I.~Bodnarchuk, Y.~Mishura, and K.~Ralchenko.
\newblock Properties of the {S}hannon, {R}\'{e}nyi and other entropies:
  dependence in parameters, robustness in distributions and extremes, 2024.
\newblock https://arxiv.org/abs/2411.15817.

\bibitem{carcassi2021variability}
G.~Carcassi, C.~A. Aidala, and J.~Barbour.
\newblock Variability as a better characterization of {S}hannon entropy.
\newblock {\em European Journal of Physics}, 42(4):045102, 2021.

\bibitem{dwivedi2022application}
P.~P. Dwivedi and D.~K. Sharma.
\newblock Application of {S}hannon entropy and {CoCoSo} methods in selection of
  the most appropriate engineering sustainability components.
\newblock {\em Cleaner Materials}, 5:100118, 2022.

\bibitem{ellerman2021new}
D.~Ellerman.
\newblock {\em New foundations for information theory: logical entropy and
  {S}hannon entropy}.
\newblock Springer Nature, 2021.

\bibitem{Feutrill21}
A.~Feutrill and M.~Roughan.
\newblock A review of {S}hannon and differential entropy rate estimation.
\newblock {\em Entropy}, 23(8):Paper No. 1046, 19, 2021.

\bibitem{Jaynes62}
E.~T. Jaynes.
\newblock Information theory and statistical mechanics.
\newblock In {\em Statistical physics ({B}randeis {S}ummer {I}nstitute, 1962,
  {V}ol. 3)}, volume Vol. 3 of {\em Brandeis Summer Institute, 1962}, pages
  181--218. W. A. Benjamin, Inc., New York-Amsterdam, 1963.

\bibitem{entropy-gaussian}
A.~Malyarenko, Y.~Mishura, K.~Ralchenko, and Y.~A. Rudyk.
\newblock Properties of various entropies of {G}aussian distribution and
  comparison of entropies of fractional processes.
\newblock {\em Axioms}, 12(11):1026, 2023.

\bibitem{nascimento2024electron}
W.~S. Nascimento, A.~M. Maniero, F.~V. Prudente, C.~R. de~Carvalho, and
  G.~Jalbert.
\newblock Electron confinement study in a double quantum dot by means of
  {S}hannon entropy information.
\newblock {\em Physica B: Condensed Matter}, 677:415692, 2024.

\bibitem{Rodriguez22}
N.~Rodriguez-Rodriguez and O.~Miramontes.
\newblock Shannon entropy: an econophysical approach to cryptocurrency
  portfolios.
\newblock {\em Entropy}, 24(11):Paper No. 1583, 15, 2022.

\bibitem{Shannon48}
C.~E. Shannon.
\newblock A mathematical theory of communication.
\newblock {\em Bell System Tech. J.}, 27:379--423, 623--656, 1948.

\bibitem{strat}
R.~L. Stratonovich.
\newblock {\em Theory of information and its value}.
\newblock Belavkin, R. V., Pardalos, P. M. and Principe, J. C. (eds.),
  Springer, Cham, 2020.

\bibitem{zachary2021urban}
D.~Zachary and S.~Dobson.
\newblock Urban development and complexity: {S}hannon entropy as a measure of
  diversity.
\newblock {\em Planning Practice \& Research}, 36(2):157--173, 2021.

\end{thebibliography}
\end{document}